\apptocmd{\thebibliography}{\raggedright}{}{}
\numberwithin{equation}{section}
\theoremstyle{plain}
\newtheorem{theorem}{Theorem}[section]
\newtheorem{maintheorem}{Theorem}
\newtheorem{lemma}[theorem]{Lemma}
\newtheorem{question}[theorem]{Question}
\theoremstyle{definition}
\newtheorem{defn}[theorem]{Definition}
\newenvironment{definition}[1][]{\begin{defn}[#1]\pushQED{\qed}}{\popQED \end{defn}}
\theoremstyle{remark}
\newtheorem{rmk}[theorem]{Remark}
\newenvironment{remark}[1][]{\begin{rmk}[#1] \pushQED{\qed}}{\popQED \end{rmk}}
\newtheorem{eg}[theorem]{Example}
\newenvironment{example}[1][]{\begin{eg}[#1] \pushQED{\qed}}{\popQED \end{eg}}
\newcommand\R{\ensuremath{\mathbb{R}}}
\newcommand\Z{\ensuremath{\mathbb{Z}}}
\DeclareMathOperator{\HH}{H}
\newcommand\Set[2]{\ensuremath{\left\{\text{#1 $|$ #2}\right\}}}
\newcommand\GroupPres[2]{\ensuremath{\left\langle \text{#1 $|$ #2} \right\rangle}}
\newcommand\fT{\ensuremath{\mathfrak{T}}}
\newcommand\tX{\ensuremath{\widetilde{X}}}
\newcommand\tp{\ensuremath{\widetilde{p}}}
\newcommand\tSigma{\ensuremath{\widetilde{\Sigma}}}
\newcommand\tast{\ensuremath{\widetilde{\ast}}}
\newcommand\oB{\ensuremath{\overline{B}}}
\newcommand\oS{\ensuremath{\overline{S}}}
\newcommand\op{\ensuremath{\overline{p}}}
\newcommand\ox{\ensuremath{\overline{x}}}
\newcommand\orho{\ensuremath{\overline{\rho}}}
\newcommand\hL{\ensuremath{\widehat{L}}}
\newcommand\Figure[4]{
\begin{figure}[t]
\centering
\centerline{\psfig{file=#2,scale=#4}}
\caption{#3}
\label{#1}
\end{figure}}
\DeclareMathOperator{\ab}{ab}
\newcommand\BigFreeProd{\mathop{\mbox{\Huge{$\ast$}}}}
\title{\vspace{-40pt}The commutator subgroups of free groups and surface groups\vspace{-15pt}}
\author{Andrew Putman\thanks{Supported in part by NSF grant DMS-1811210}}
\date{}
\begin{document}

\vspace{-10pt}
\maketitle

\vspace{-18pt}
\begin{abstract}
\noindent
A beautifully simple free generating set for the commutator subgroup of a free group
was constructed by Tomaszewski.  We give a new geometric proof of his theorem, and show
how to give a similar free generating set for the commutator subgroup of a surface group.
We also give a simple representation-theoretic description of the structure of the 
abelianizations of these commutator subgroups and calculate their homology.
\end{abstract}

\section{Introduction}
\label{section:introduction}

Let $F_n$ be the free group on $\{x_1,\ldots,x_n\}$.  The commutator subgroup
$[F_n,F_n]$ is an infinite-rank free group.  The conjugation action
of $F_n$ on $[F_n,F_n]$ induces an action on its abelianization $[F_n,F_n]^{\ab}$
that factors through $F_n/[F_n,F_n] = \Z^n$, making $[F_n,F_n]^{\ab}$ into
a $\Z[\Z^n]$-module.  This paper addresses the following questions:
\begin{itemize}
\item What kind of $\Z[\Z^n]$-module is $[F_n,F_n]^{\ab}$?  Can we describe it in
terms of simpler representations and calculate its homology?
\item It is easy to write down free generating sets for $[F_n,F_n]$; indeed, this is
often given as an exercise when teaching covering spaces.  However, doing this naively
gives a very complicated free generating set.  Can it be made simpler?
\end{itemize}
We also study the analogous questions for the fundamental groups of closed surfaces.

\Figure{figure:ranktwo}{RankTwo}{(a) The cover $\tX \rightarrow X$ corresponding to $[F_2,F_2]$.
(b) A maximal tree in $\tX$.
(c) A generator $x_1 x_2^2 x_1 x_2^{-2} x_1^{-2} \in [F_2,F_2]$ coming from the maximal tree.
(d) The element $[x_1,x_2]^{x_1 x_2} \in [F_2,F_2]$.
(e) The element $[x_1^2, x_2^2] \in [F_2,F_2]$.}{82}

\paragraph{Rank 2.}
As motivation, we start with the case $n=2$.  Regarding
$F_2$ as the fundamental group of a wedge $X$ of two circles, the subgroup $[F_2,F_2]$
corresponds to the cover $\tX$ shown in Figure \ref{figure:ranktwo}.a.
The abelianization $[F_2,F_2]^{\ab}$ is $\HH_1(\tX)$.  This is a free abelian group
with basis the set of squares in Figure \ref{figure:ranktwo}.a.  The deck group $\Z^2$ permutes these squares
simply transitively, so $[F_2,F_2]^{\ab}$ is a rank-$1$ free $\Z[\Z^2]$-module:
\[[F_2,F_2]^{\ab} \cong \Z[\Z^2].\]
There are several natural choices of free generating sets for the free group $[F_2,F_2]$:
\begin{itemize}
\item[(i)] Using the maximal tree shown in Figure \ref{figure:ranktwo}.b, we obtain
the free generating set
\[\Set{$x_1^{k_1} x_2^{k_2} x_1 x_2^{-k_2} x_1^{-k_1-1}$}{$k_1,k_2 \in \Z$ and $k_2 \neq 0$}\]
illustrated in Figure \ref{figure:ranktwo}.c.
\item[(ii)] It is not hard\footnote{We will prove a much more general result
in Theorem \ref{maintheorem:commfnbasis} below.} to convert the free generating set (i) into
the free generating set
\[\Set{$[x_1,x_2]^{x_1^{k_1} x_2^{k_2}}$}{$k_1,k_2 \in \Z$}.\]
illustrated in Figure \ref{figure:ranktwo}.d.  Here our conventions
are that $[y,z] = y^{-1} z^{-1} y z$ and that superscripts indicate
conjugation: $y^z = z^{-1} y z$.  Unlike in (i), this
does {\em not} arise from a maximal tree.
\item[(iii)] A classical exercise in combinatorial group theory (see \cite[p.\ 196, exer.\ 24]{MagnusKarassSolitar}
or \cite[Proposition I.1.4]{SerreTrees}) gives the free generating set
\[\Set{$[x_1^{k_1}, x_2^{k_2}]$}{$k_1,k_2 \in \Z$ and $k_1,k_2 \neq 0$}\]
illustrated in Figure \ref{figure:ranktwo}.e.  Again, this does not arise
from a maximal tree.
\end{itemize}

\begin{remark}
It is hard to see that
$[F_2,F_2]^{\ab} \cong \Z[\Z^2]$
from the generating sets (i) and (iii) above, but this isomorphism can be easily
deduced from the generating set (ii).
\end{remark}

\paragraph{Generating sets in higher rank.}
How about $[F_n,F_n]$ for $n \geq 3$?  A maximal tree argument like (i) above
leads to the very complicated free generating set
\[\Set{$(x_1^{k_1} \cdots x_m^{k_m}) x_{\ell} (x_1^{k_1} \cdots x_{\ell}^{k_{\ell}+1} \cdots x_m^{k_m})^{-1}$}{$1 \leq \ell < m \leq n$, $k_1,\ldots,k_m \in \Z$, $k_m \neq 0$}.\]
Tomaszewski generalized (ii) to give the following much simpler free generating set:

\begin{maintheorem}[Tomaszewski \cite{Tomaszewski}]
\label{maintheorem:commfnbasis}
For $n \geq 1$, the group $[F_n,F_n]$ is freely generated by
\[\Set{$[x_i,x_j]^{x_i^{k_i} x_{i+1}^{k_{i+1}} \cdots x_n^{k_n}}$}{$1 \leq i < j \leq n$ and $k_i,\ldots,k_n \in \Z$}.\]
\end{maintheorem}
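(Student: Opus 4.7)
The plan is to prove the theorem by induction on $n$, using a Bass--Serre decomposition of $[F_n,F_n]$. The base case $n = 1$ is vacuous: $[F_1,F_1]$ is trivial and the indexed family is empty.

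For the inductive step, I would use the splitting $F_n = F_{n-1} * \langle x_n \rangle$ and the associated Bass--Serre tree $\Gamma$. Restricting the action to $H = [F_n,F_n]$ and computing stabilizers, the $H$-orbits of ``$F_{n-1}$-type'' vertices are indexed by $v_n \in \Z$ with stabilizer $x_n^{v_n}[F_{n-1},F_{n-1}]x_n^{-v_n}$, while the ``$\langle x_n \rangle$-type'' vertices and all edges have trivial $H$-stabilizers (since $[F_n,F_n]$ meets each conjugate of $\langle x_n \rangle$ only in the identity). Choosing a natural spanning tree in the quotient graph of groups, with tree edges corresponding to $x_n^{v_n}$ for $v_n \neq 0$ and to $x_1^{v_1} \cdots x_{n-1}^{v_{n-1}}$ for $(v_1,\ldots,v_{n-1}) \in \Z^{n-1}$, one gets
\[[F_n,F_n] \;\cong\; \Big(\BigFreeProd_{v_n \in \Z} x_n^{v_n}[F_{n-1},F_{n-1}]x_n^{-v_n}\Big) \;*\; F_{\mathrm{stable}},\]
where a direct computation shows the stable letters to be
\[t_v \;=\; [x_n^{-v_n},\; (x_1^{v_1} \cdots x_{n-1}^{v_{n-1}})^{-1}]\]
for $v = (v_1,\ldots,v_n)$ with $v_n \neq 0$ and $(v_1,\ldots,v_{n-1}) \neq 0$.

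Applying the inductive hypothesis to each factor $[F_{n-1},F_{n-1}]$ and using the identity $x_n^{v_n}\,[x_i,x_j]^{w}\,x_n^{-v_n} = [x_i,x_j]^{w \cdot x_n^{-v_n}}$, the conjugates of the Tomaszewski generators for $F_{n-1}$ (as $v_n$ ranges over $\Z$) recover exactly the Tomaszewski generators of $F_n$ with $j \leq n-1$.

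For the Tomaszewski generators with $j = n$, I would expand each $t_v$ using the commutator identities $[a,bc] = [a,c][a,b]^c$ and $[a^m, b^\ell] = \prod_{p,q} [a,b]^{a^p b^q}$ into a product of conjugates $[x_r, x_n]^{x_r^{\,?} x_n^{\,?}}$, which are Tomaszewski $(r,n)$-generators. A triangularity argument, ordering these elements first by $|v_n|$ and then lexicographically on $(v_1,\ldots,v_{n-1})$, should then show that the resulting substitution is unitriangular and hence a valid Nielsen transformation from the stable-letter basis to the Tomaszewski $(i,n)$-generator basis.

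The main obstacle is this final Nielsen transformation step. Already in the $n = 2$ case, the stable letters come out as $\{[x_1^m, x_2^\ell] : m,\ell \neq 0\}$ (the classical basis (iii) from the paper), which is not the Tomaszewski basis; converting between them requires careful bookkeeping with iterated commutator expansions, and the need to check that the order of rewriting genuinely yields a sequence of elementary Nielsen moves on the free basis. For general $n$, the argument combines the $n = 2$ case with the inductive decomposition described above.
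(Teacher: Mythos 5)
Your Bass--Serre setup is sound as far as it goes, and it is genuinely different from the paper's route: splitting $F_n=F_{n-1}\ast\langle x_n\rangle$ and letting $[F_n,F_n]$ act on the associated tree does give trivial edge stabilizers, vertex stabilizers $x_n^{v_n}[F_{n-1},F_{n-1}]x_n^{-v_n}$, and hence a decomposition of $[F_n,F_n]$ as the free product of these conjugates with a free group on stable letters generalizing the classical basis (iii) of the introduction; moreover the conjugated inductive bases do account exactly for the Tomaszewski generators with $j\leq n-1$. (The paper instead splits off $x_1$: it writes $[F_n,F_n]=B\rtimes[F(T),F(T)]$ with $T=\{x_2,\ldots,x_n\}$, finds a basis of $B$ geometrically via the ladder decomposition in Lemma \ref{lemma:recognizeprojection}, and merges the pieces with the abelianization/Hopfian recognition Lemmas \ref{lemma:recognizefree} and \ref{lemma:improvesemidirect}, never passing through basis (iii).)

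The gap is the final step, which you flag yourself: converting the stable letters $t_v$ into the Tomaszewski $(i,n)$-generators. First, the claim that $t_v$ expands into a product of conjugates $[x_r,x_n]^{x_r^{?}x_n^{?}}$ is false for $n\geq 3$: expanding $[x_n^{-v_n},(x_1^{v_1}\cdots x_{n-1}^{v_{n-1}})^{-1}]$ by the identities $[a,bc]=[a,c][a,b]^c$, etc., produces factors of the form $[x_j,x_n]^{w}$ where the conjugating word $w$ contains letters $x_i$ with $i<j$ and has its $x_n$-powers in the wrong position relative to the prescribed form $x_j^{k_j}x_{j+1}^{k_{j+1}}\cdots x_n^{k_n}$; rewriting such a $w$ into Tomaszewski form means commuting letters past one another, which creates new commutator corrections that must themselves be re-expanded --- exactly the unbounded commutator bookkeeping of Tomaszewski's original proof that the paper is designed to avoid. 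Second, even for $n=2$ the asserted unitriangularity of the passage from $\{[x_1^{m},x_2^{\ell}]\}$ to $\{[x_1,x_2]^{x_1^{k_1}x_2^{k_2}}\}$ is not established, and since all ranks here are infinite there is no rank count to fall back on: you must prove both generation and independence. Note also that the subgroup generated by the $(i,n)$-generators is \emph{not} the free factor on the stable letters, so any honest change of basis mixes the two free factors and in general involves the $j<n$ generators as well, which undercuts the proposed ordering by $|v_n|$ and lexicographic order. A realistic way to finish from your decomposition is to drop Nielsen moves entirely and use a recognition criterion like the paper's Lemma \ref{lemma:recognizefree} (a generating set of a free group projecting to a basis of its abelianization is a free basis), but then you still owe a proof that the Tomaszewski set generates $[F_n,F_n]$ and maps to a basis of its abelianization; as written, the proposal does not supply this, so the theorem is not yet proved.
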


Theorem \ref{maintheorem:commfnbasis} plays an important role in the author's work on the Torelli
group \cite{PutmanInfinite, PutmanSmallGenset}.
Tomaszewski's proof of it was combinatorial and involved extensive calculations
with commutator identities.  We will give a proof that is geometric and calculation-free.

\begin{remark}
It is unclear how to generalize the generating set (iii) above to $n \geq 3$.
\end{remark}

\paragraph{Generating sets for surface groups.}
Consider the fundamental group of a closed oriented genus-$g$ surface $\Sigma_g$:
\[\pi_1(\Sigma_g) = \GroupPres{$x_1,\ldots,x_{2g}$}{$[x_1,x_2] \cdots [x_{2g-1}, x_{2g}] = 1$}.\]
Just like $[F_n,F_n]$, the commutator subgroup $[\pi_1(\Sigma_g),\pi_1(\Sigma_g)]$ is
an infinite-rank free group\footnote{Letting $\tSigma$ be the cover of $\Sigma_g$ corresponding
to $[\pi_1(\Sigma_g),\pi_1(\Sigma_g)]$, this means that $\pi_1(\tSigma)$ is free.
In fact, Johansson \cite{Johansson} proved that all non-compact connected surfaces have free fundamental
groups.  A conceptual way to see this uses a theorem of Whitehead (\cite{Whitehead}; see \cite{PutmanNote} for an 
expository account) that says that any smooth connected noncompact $n$-manifold $M$ deformation retracts
to an $(n-1)$-dimensional subcomplex.  Applied to $\tSigma$, this says that $\tSigma$ deformation retracts
to a $1$-dimensional simplicial complex, i.e.\ a graph.  We conclude by observing that graphs have free fundamental
groups.}.  How can we find a free generating set for it?  

One idea is to try to use Theorem \ref{maintheorem:commfnbasis}.  The group $[\pi_1(\Sigma_g),\pi_1(\Sigma_g)]$
can be obtained from $[F_{2g},F_{2g}]$ by quotienting out by the $F_{2g}$-normal closure of the surface
relation
\[[x_1,x_2] \cdots [x_{2g-1}, x_{2g}].\]
When you try to express the $F_{2g}$-conjugates of this in terms of the free basis
\[\Set{$[x_i,x_j]^{x_i^{k_i} x_{i+1}^{k_{i+1}} \cdots x_{2g}^{k_{2g}}}$}{$1 \leq i < j \leq 2g$ and $k_i,\ldots,k_{2g} \in \Z$}\]
for $[F_{2g},F_{2g}]$ given by Theorem \ref{maintheorem:commfnbasis}, it is natural to assume that
the relations you impose must each involve a generator of the form
\[[x_1,x_2]^{x_1^{k_1} \cdots x_{2g}^{k_{2g}}}.\]
After all, these are the only generators whose conjugating elements involve all of the generators of $F_{2g}$.
This suggests that these $[F_{2g},F_{2g}]$-generators can be eliminated, and the remaining ones should freely
generate $[\pi_1(\Sigma_g),\pi_1(\Sigma_g)]$.

Unfortunately, it seems quite hard to make this idea rigorous.  Even expressing surface relations
of the form
\[\left([x_1,x_2] \cdots [x_{2g-1}, x_{2g}]\right)^{x_1^{k_1} \cdots x_{2g}^{k_{2g}}}\]
in terms of our generators seems quite messy.  Nonetheless, we will give a different geometric
argument to show that the above idea does lead to a free generating set:

\begin{maintheorem}
\label{maintheorem:commsurfbasis}
For $g \geq 1$, the group $[\pi_1(\Sigma_g),\pi_1(\Sigma_g)]$ is freely generated by
\[\Set{$[x_i,x_j]^{x_i^{k_i} x_{i+1}^{k_{i+1}} \cdots x_{2g}^{k_{2g}}}$}{$1 \leq i < j \leq 2g$, $(i,j) \neq (1,2)$, and $k_i,k_{i+1},\ldots,k_{2g} \in \Z$}.\]
\end{maintheorem}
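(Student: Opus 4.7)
The plan is to realize $[\pi_1(\Sigma_g),\pi_1(\Sigma_g)]$ as the fundamental group of a covering space, apply Theorem~\ref{maintheorem:commfnbasis} to its $1$-skeleton, and then use a Tietze transformation, realized geometrically, to eliminate the ``extra'' Tomaszewski generators. Equip $\Sigma_g$ with its standard CW structure (one vertex $v_0$, loops $x_1,\ldots,x_{2g}$, one $2$-cell attached along $R=[x_1,x_2]\cdots[x_{2g-1},x_{2g}]$), and let $p\colon \tSigma\to\Sigma_g$ be the cover corresponding to $[\pi_1(\Sigma_g),\pi_1(\Sigma_g)]$, with deck group $\Z^{2g}$. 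The $1$-skeleton $\tSigma^{(1)}$ is the cover of the wedge of $2g$ circles corresponding to $[F_{2g},F_{2g}]$, so Theorem~\ref{maintheorem:commfnbasis} identifies $\pi_1(\tSigma^{(1)})=[F_{2g},F_{2g}]$ with the free group on $S\sqcup S'$, where $S$ is the set in the statement of Theorem~\ref{maintheorem:commsurfbasis} and
\[ S'=\bigl\{[x_1,x_2]^{x_1^{k_1}\cdots x_{2g}^{k_{2g}}} : (k_1,\ldots,k_{2g})\in\Z^{2g}\bigr\}. \]
The $2$-cells of $\tSigma$ are indexed by $\Z^{2g}$, one cell $F^k$ per $k$, attached along $R^{w_k}$ with $w_k=x_1^{k_1}\cdots x_{2g}^{k_{2g}}$. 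Van Kampen therefore gives
\[ [\pi_1(\Sigma_g),\pi_1(\Sigma_g)]=[F_{2g},F_{2g}]\big/\langle\langle R^{w_k}:k\in\Z^{2g}\rangle\rangle. \]

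The strategy is to use each relator $R^{w_k}$ to eliminate the corresponding $S'$-generator $[x_1,x_2]^{w_k}$. I would realize this geometrically by subdividing each $2$-cell $F^k$: insert a chord $d^k$ inside $F^k$ joining the two boundary occurrences of the vertex $v_k$ that separate the initial sub-loop $[x_1,x_2]^{w_k}$ from the remaining sub-loops $[x_3,x_4]^{w_k}\cdots[x_{2g-1},x_{2g}]^{w_k}$. This splits $F^k$ into two $2$-cells with boundary words $[x_1,x_2]^{w_k}(d^k)^{-1}$ and $d^k\cdot[x_3,x_4]^{w_k}\cdots[x_{2g-1},x_{2g}]^{w_k}$. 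The first cell identifies the new generator $d^k$ with $[x_1,x_2]^{w_k}$, and substituting into the second gives an equation expressing $[x_1,x_2]^{w_k}$ as the inverse of the product $[x_3,x_4]^{w_k}\cdots[x_{2g-1},x_{2g}]^{w_k}$. Provided this product lies in the subgroup $\langle S\rangle\subseteq[F_{2g},F_{2g}]$ freely generated by $S$, this is a clean substitution of each $[x_1,x_2]^{w_k}$ by a word in $S$; since distinct values of $k$ involve disjoint $S'$-generators, all of $S'$ can be eliminated simultaneously, leaving $S$ as a free basis.

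The main obstacle is the geometric claim that, for every $(i,j)\neq(1,2)$ and every $v\in F_{2g}$, the element $[x_i,x_j]^v$ lies in $\langle S\rangle$; equivalently, that $\langle S\rangle$ is stable under $F_{2g}$-conjugation of the base commutators $[x_i,x_j]$ with $(i,j)\neq(1,2)$. This is non-obvious because Tomaszewski's basis is not symmetric under conjugation by arbitrary elements of $F_{2g}$, and it is precisely the point where naive algebraic manipulations in the commutator subgroup turn messy (as the paper observes). I would attack it by extending the covering-space argument that underlies Theorem~\ref{maintheorem:commfnbasis}: either identify $\langle S\rangle$ with the fundamental group of an explicit intermediate cover of $\tSigma^{(1)}$, in which each loop $[x_i,x_j]^v$ is visibly contained, or---failing that---weaken the required claim so that the equality $[x_i,x_j]^v\in\langle S\rangle$ is only needed modulo the normal subgroup $\langle\langle R^{w_k}\rangle\rangle$ and carry out the Tietze eliminations by an induction on $k\in\Z^{2g}$ in a suitable well-ordering. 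Once this membership step is in place, the subdivision-plus-Tietze argument above mechanically produces the free basis promised in Theorem~\ref{maintheorem:commsurfbasis}.
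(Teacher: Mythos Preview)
Your outline correctly identifies the crux but does not resolve it, and the ``main obstacle'' as you state it is actually false. You claim that for every $(i,j)\neq(1,2)$ and every $v\in F_{2g}$ the element $[x_i,x_j]^v$ lies in the free factor $\langle S\rangle$ of $[F_{2g},F_{2g}]=F(S)\ast F(S')$. If that held, $\langle S\rangle$ would equal the $F_{2g}$-normal closure of $\{[x_i,x_j]:(i,j)\neq(1,2)\}$ and hence be normal in $[F_{2g},F_{2g}]$; but a proper nontrivial free factor of a free product is never normal (for $g\geq 2$ both $S$ and $S'$ are nonempty). What your Tietze scheme actually requires is the weaker statement that each $\bigl([x_3,x_4]\cdots[x_{2g-1},x_{2g}]\bigr)^{w_k}$, written in the basis $S\cup S'$, involves no $S'$-letters. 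You give no argument for this, and your suggested attacks (realize $\langle S\rangle$ as a cover, or eliminate inductively modulo relators already imposed) are precisely the ``messy'' route the introduction warns cannot easily be made rigorous: the first fails because $\langle S\rangle$ is not normal, and the second requires controlling how each elimination rewrites the remaining relators.

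The paper sidesteps the problem by never working in $[F_{2g},F_{2g}]$. It takes $G\lhd\pi=\pi_1(\Sigma_g)$ to be the normal closure of $\{x_3,\ldots,x_{2g}\}$; the corresponding $\Z^2$-cover of $\Sigma_g$ is a plane with a genus-$(g-1)$ handle at each lattice point, and this picture gives directly $G=\BigFreeProd_{(k_1,k_2)\in\Z^2}F^{x_1^{k_1}x_2^{k_2}}$ with $F=\langle x_3,\ldots,x_{2g}\rangle$ free. The surface relation is absorbed into this geometric decomposition and never reappears as a relator to be eliminated. A change of free basis for $G$ (Lemma~\ref{lemma:basis}) then exhibits the kernel of the fold map $\phi\colon G\to F$ as free on an explicit set of $[x_1,x_j]$- and $[x_2,x_j]$-type conjugates; since $[\pi,\pi]=\ker\phi\rtimes[F,F]$, Lemma~\ref{lemma:improvesemidirect} together with Theorem~\ref{maintheorem:commfnbasis} applied to $[F,F]$ assembles the claimed basis.
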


\begin{remark}
We do not know any other explicit free generating sets in the literature for $[\pi_1(\Sigma_g),\pi_1(\Sigma_g)]$.
\end{remark}

\paragraph{Non-freeness and homology.}
As we discussed above, $[F_2,F_2]^{\ab}$ is a free $\Z[\Z^2]$-module.  It is natural to wonder
whether this holds in higher rank.  The structure of the free generating set for $[F_n,F_n]$ given in
Theorem \ref{maintheorem:commfnbasis} suggests that it should not be free, and we prove
that this indeed is the case.

In fact, we prove even more.  For a group $G$, if $M$ is a free $\Z[G]$-module then
$\HH_k(G;M) = 0$ for all $k \geq 1$.  To prove that $M$ is not free, it is thus enough to find
some nonvanishing homology group of degree at least $1$.  We prove the following:

\begin{maintheorem}
\label{maintheorem:homologycalcfn}
For $n \geq 2$ and $k \geq 0$, we have $\HH_k(\Z^n;[F_n,F_n]^{\ab}) \cong \wedge^{k+2} \Z^n$.  In particular,
$[F_n,F_n]^{\ab}$ is not a free $\Z[\Z^n]$-module for $n \geq 3$.
\end{maintheorem}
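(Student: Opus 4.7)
The plan is to apply the Lyndon--Hochschild--Serre spectral sequence to the extension $1 \to [F_n, F_n] \to F_n \to \Z^n \to 1$ and exploit that both $F_n$ and $[F_n, F_n]$ are free groups, hence have homological dimension at most $1$. This spectral sequence has the form
\[E^2_{p,q} = H_p(\Z^n; H_q([F_n, F_n])) \Longrightarrow H_{p+q}(F_n).\]
Since $[F_n, F_n]$ is free, only rows $q = 0$ and $q = 1$ are nonzero: $H_0([F_n, F_n]) = \Z$ with trivial $\Z^n$-action, and $H_1([F_n, F_n]) = [F_n, F_n]^{\ab}$ with its conjugation action. Using the standard identification $H_p(\Z^n; \Z) \cong \wedge^p \Z^n$, the two nonzero rows read
\[E^2_{p, 0} = \wedge^p \Z^n \qquad \text{and} \qquad E^2_{p, 1} = H_p(\Z^n; [F_n, F_n]^{\ab}).\]

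With only two rows nonzero, the spectral sequence degenerates at $E^3$, and the only possibly nontrivial differential is $d^2 : \wedge^p \Z^n \to H_{p-2}(\Z^n; [F_n, F_n]^{\ab})$. The next step is to read off what this differential does from the abutment. Since $F_n$ is free, the spectral sequence converges to $\Z$ in degree $0$, $\Z^n$ in degree $1$, and $0$ in higher degrees. The edge term $E^\infty_{1, 0}$ is all of $\wedge^1 \Z^n = \Z^n$, which already accounts for $H_1(F_n)$, so $E^\infty_{0,1} = 0$ as well. Hence $E^\infty_{p,q}$ vanishes at every $(p, q)$ besides $(0, 0)$ and $(1, 0)$.

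Translating these vanishings back: the vanishing of $E^\infty_{p, 0}$ for $p \geq 2$ gives injectivity of $d^2 : \wedge^p \Z^n \to H_{p-2}(\Z^n; [F_n, F_n]^{\ab})$, while the vanishing of $E^\infty_{p, 1}$ for $p \geq 0$ gives surjectivity of $d^2 : \wedge^{p+2} \Z^n \to H_p(\Z^n; [F_n, F_n]^{\ab})$. Combining, this $d^2$ is an isomorphism for every $k \geq 0$, giving $H_k(\Z^n; [F_n, F_n]^{\ab}) \cong \wedge^{k+2} \Z^n$. The non-freeness is then immediate: for $n \geq 3$ we have $H_1(\Z^n; [F_n, F_n]^{\ab}) \cong \wedge^3 \Z^n \neq 0$, and any free $\Z[\Z^n]$-module would have vanishing higher $\Z^n$-homology.

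The main subtlety is the edge case $k = 0$: to conclude that $d^2 : \wedge^2 \Z^n \to H_0(\Z^n; [F_n, F_n]^{\ab})$ is an isomorphism requires both $H_2(F_n) = 0$ (for injectivity) and the identification $E^\infty_{1, 0} = \Z^n = H_1(F_n)$ leaving no room for $E^\infty_{0, 1}$ (for surjectivity). The rest of the argument is a routine application of standard machinery. This proof would work equally well via a direct chain-level computation using the Cayley graph $\tX$ of $\Z^n$, whose cellular chain complex is the truncation of the Koszul resolution of $\Z$ over $\Z[\Z^n]$ to its first two terms.
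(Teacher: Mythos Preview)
Your proof is correct and follows essentially the same approach as the paper: both run the Hochschild--Serre spectral sequence for $1 \to [F_n,F_n] \to F_n \to \Z^n \to 1$, use that $[F_n,F_n]$ is free to reduce to two nonzero rows, and read off from the abutment $\HH_\ast(F_n)$ that the $d^2$ differentials $\wedge^{k+2}\Z^n \to \HH_k(\Z^n;[F_n,F_n]^{\ab})$ are isomorphisms. Your write-up is slightly more explicit about the injectivity/surjectivity split and the $k=0$ edge case, but the argument is the same.
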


\begin{remark}
The case $k=0$ says that $\HH_0(\Z^n;[F_n,F_n]^{\ab}) \cong \wedge^2 \Z^n$.  Recall that the $0^{\text{th}}$
homology group is the coinvariants of the coefficient module, i.e.\ the largest
quotient of the coefficient module on which the group acts trivially.  Since the action of
$\Z^n$ on $[F_n,F_n]^{\ab}$ is induced by the conjugation action of $F_n$ on $[F_n,F_n]$, we have
\[\HH_0(\Z^n;[F_n,F_n]^{\ab}) = \frac{[F_n,F_n]}{[F_n,[F_n,F_n]]} \cong \wedge^2 \Z^n.\]
This is a special case of a classical theorem of Magnus (\cite{MagnusLie}; see \cite{SerreLie} for
an expository account) that says that the graded quotients of the lower central series of a free group
are precisely the graded terms of the free Lie algebra.
\end{remark}

For surface groups, the abelianization $[\pi_1(\Sigma_g),\pi_1(\Sigma_g)]^{\ab}$ is a module over
\[\frac{\pi_1(\Sigma_g)}{[\pi_1(\Sigma_g),\pi_1(\Sigma_g)]} \cong \HH_1(\Sigma_g) \cong \Z^{2g}.\]
The analogue of Theorem \ref{maintheorem:homologycalcfn} for these groups is as follows:

\begin{maintheorem}
\label{maintheorem:homologycalcsurf}
For $g \geq 1$ and $k \geq 0$, we have
\[\HH_k(\Z^{2g};[\pi_1(\Sigma_g),\pi_1(\Sigma_g)]^{\ab}) \cong
\begin{cases}
(\wedge^2 \Z^{2g}) / \Z & \text{if $k = 0$},\\
\wedge^{k+2} \Z^{2g} & \text{if $k \geq 1$}.
\end{cases}\]
In particular, $[\pi_1(\Sigma_g),\pi_1(\Sigma_g)]^{\ab}$ is not a free $\Z[\Z^{2g}]$-module
for $g \geq 2$.
\end{maintheorem}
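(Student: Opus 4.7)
The plan is to derive Theorem~\ref{maintheorem:homologycalcsurf} from Theorem~\ref{maintheorem:homologycalcfn} via a short exact sequence of $\Z[\Z^{2g}]$-modules relating $[F_{2g},F_{2g}]^{\ab}$ and $[\pi_1(\Sigma_g),\pi_1(\Sigma_g)]^{\ab}$.

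Set $R = [x_1,x_2]\cdots[x_{2g-1},x_{2g}]$ and let $N$ be the $F_{2g}$-normal closure of $R$, so that $\pi_1(\Sigma_g) = F_{2g}/N$. Since $R \in [F_{2g},F_{2g}]$, we have $N \subset [F_{2g},F_{2g}]$, and using that inner conjugation by $[F_{2g},F_{2g}]$ acts trivially on $[F_{2g},F_{2g}]^{\ab}$, a direct check gives
\[
[\pi_1(\Sigma_g),\pi_1(\Sigma_g)]^{\ab} \;=\; [F_{2g},F_{2g}]^{\ab}\big/K,
\]
where $K \subset [F_{2g},F_{2g}]^{\ab}$ is the $\Z[\Z^{2g}]$-submodule generated by the image $[R]$ of $R$. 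We claim further that $K \cong \Z[\Z^{2g}]$. The module $[F_{2g},F_{2g}]^{\ab}$ is the first homology of the maximal abelian cover $\tX$ of a wedge of $2g$ circles, and from its cellular chain complex $0 \to \Z[\Z^{2g}]^{2g} \xrightarrow{\partial_1} \Z[\Z^{2g}] \to 0$ it embeds in the free module $\Z[\Z^{2g}]^{2g}$ via the Fox derivative map. Since $\Z[\Z^{2g}]$ is an integral domain, $[F_{2g},F_{2g}]^{\ab}$ is torsion-free as a $\Z[\Z^{2g}]$-module, so it suffices to check $[R] \neq 0$; this is immediate from $\partial R/\partial x_1 = x_1^{-1}(x_2^{-1}-1) \neq 0$.

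Applying $\HH_\ast(\Z^{2g};-)$ to the resulting short exact sequence $0 \to K \to [F_{2g},F_{2g}]^{\ab} \to [\pi_1(\Sigma_g),\pi_1(\Sigma_g)]^{\ab} \to 0$, the freeness of $K$ gives $\HH_k(\Z^{2g};K) = 0$ for $k \geq 1$ and $\HH_0(\Z^{2g};K) = \Z$, while Theorem~\ref{maintheorem:homologycalcfn} provides $\HH_k(\Z^{2g};[F_{2g},F_{2g}]^{\ab}) = \wedge^{k+2}\Z^{2g}$. For $k \geq 2$, the long exact sequence at once yields $\HH_k(\Z^{2g};[\pi_1(\Sigma_g),\pi_1(\Sigma_g)]^{\ab}) \cong \wedge^{k+2}\Z^{2g}$. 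Abbreviating $\pi_1 = \pi_1(\Sigma_g)$, the remaining low-degree portion reads
\[
0 \to \wedge^3 \Z^{2g} \to \HH_1(\Z^{2g};[\pi_1,\pi_1]^{\ab}) \to \Z \xrightarrow{\phi} \wedge^2 \Z^{2g} \to \HH_0(\Z^{2g};[\pi_1,\pi_1]^{\ab}) \to 0.
\]

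It remains to identify $\phi$. Under the Magnus isomorphism $\HH_0(\Z^{2g};[F_{2g},F_{2g}]^{\ab}) \cong \wedge^2 \Z^{2g}$ recalled after Theorem~\ref{maintheorem:homologycalcfn}, which sends $[x_i,x_j] \mapsto e_i \wedge e_j$, the map $\phi$ carries $1$ to the symplectic form $\omega = \sum_{i=1}^g e_{2i-1}\wedge e_{2i} \neq 0$. Hence $\phi$ is injective, forcing $\HH_1(\Z^{2g};[\pi_1,\pi_1]^{\ab}) \cong \wedge^3 \Z^{2g}$ and $\HH_0(\Z^{2g};[\pi_1,\pi_1]^{\ab}) \cong (\wedge^2 \Z^{2g})/\Z$. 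For $g \geq 2$ the non-freeness claim follows since $\wedge^3 \Z^{2g} \neq 0$, whereas a free $\Z[\Z^{2g}]$-module has trivial higher group homology. The only real obstacle is the freeness of $K$; the rest is routine homological algebra.
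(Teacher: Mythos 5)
Your argument is correct, and the homological part (the long exact sequence in $\Z^{2g}$-homology, the vanishing of higher homology with coefficients in the free rank-one submodule, and the identification of the map $\Z \to \wedge^2 \Z^{2g}$ with $1 \mapsto \ox_1\wedge\ox_2+\cdots+\ox_{2g-1}\wedge\ox_{2g}$, hence injective) is essentially identical to the paper's. Where you diverge is in how you obtain the key input, namely the short exact sequence $0 \to \Z[\Z^{2g}] \to [F_{2g},F_{2g}]^{\ab} \to [\pi_1(\Sigma_g),\pi_1(\Sigma_g)]^{\ab} \to 0$: the paper isolates this as Theorem \ref{maintheorem:commsurfmod} and proves it geometrically, realizing the submodule generated by the class of the relator as the image of $\HH_1(\partial\tSigma)$ in $\HH_1(\tSigma)$ for the cover $\tSigma \to \Sigma_g^1$ corresponding to $[F_{2g},F_{2g}]$, and getting injectivity from $\HH_2(\tSigma,\partial\tSigma)=0$ (noncompactness of the capped-off surface). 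You instead argue algebraically: $[F_{2g},F_{2g}]^{\ab}=\HH_1$ of the maximal abelian cover of the wedge sits inside the free module $C_1\cong\Z[\Z^{2g}]^{2g}$, so it is torsion-free over the Laurent polynomial domain $\Z[\Z^{2g}]$, and the cyclic submodule $K$ generated by $[R]$ is free of rank one once $[R]\neq 0$, which you verify by the Fox derivative $\overline{\partial R/\partial x_1}=x_1^{-1}(x_2^{-1}-1)\neq 0$ (your reduction of $[\pi_1(\Sigma_g),\pi_1(\Sigma_g)]^{\ab}$ to $[F_{2g},F_{2g}]^{\ab}/K$ via triviality of inner conjugation by $[F_{2g},F_{2g}]$ on the abelianization is also fine). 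Your route is shorter and needs only Fox calculus plus the domain property of $\Z[\Z^{2g}]$; the paper's route is calculation-free and gives the more geometric picture of the kernel as the module of boundary classes, in keeping with its overall approach. A minor simplification available to you: $[R]\neq 0$ already follows from the $k=0$ case of Theorem \ref{maintheorem:homologycalcfn}, since the image of $[R]$ in the coinvariants $\wedge^2\Z^{2g}$ is the symplectic class you use later.
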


\paragraph{Module structure.}
If $[F_n,F_n]^{\ab}$ is not a free $\Z[\Z^n]$-module, what kind of module is it?  We will
prove that it has a filtration whose associated graded terms are not free, but are
not too far from being free.  In the following theorem, for $1 \leq m \leq n$ we regard
$\Z^m$ as a module over $\Z^n$ via the projection $\Z^n \rightarrow \Z^m$ onto the final
$m$ coordinates.

\begin{maintheorem}
\label{maintheorem:commfnmod}
For all $n \geq 1$, the $\Z[\Z^n]$-module $[F_n,F_n]^{\ab}$ has a sequence
\[0 = M_0 \subset M_1 \subset \cdots \subset M_{n-1} = [F_n,F_n]^{\ab}\]
of submodules such that
\[M_k/M_{k-1} \cong \left(\Z\left[\Z^{n-k+1}\right]\right)^{\oplus (n-k)}\]
for $1 \leq k \leq n-1$.
\end{maintheorem}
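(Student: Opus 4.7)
The plan is to read the filtration directly off of Tomaszewski's free generating set from Theorem \ref{maintheorem:commfnbasis}. Write $e_{ij}$ for the image of $[x_i,x_j]$ in $[F_n,F_n]^{\ab}$, and group the Tomaszewski basis by the ``left'' index $i$: for $1 \leq i \leq n-1$, let $V_i \subseteq [F_n,F_n]^{\ab}$ be the $\Z$-span of the basis elements $[x_i,x_j]^{x_i^{k_i}\cdots x_n^{k_n}}$ with $i < j \leq n$, so that $[F_n,F_n]^{\ab} = V_1 \oplus \cdots \oplus V_{n-1}$ as abelian groups. Because conjugation by $x_a$ with $a \geq i$ only increments one exponent in the conjugating word (the $\Z^n$-action being abelian), each $V_i$ is a free $\Z[\Z^{n-i+1}]$-module of rank $n-i$ on $e_{i,i+1},\ldots,e_{i,n}$, with $\Z^{n-i+1}$ identified with $\langle x_i,\ldots,x_n\rangle \leq \Z^n$. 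I then set $M_k := V_1 \oplus \cdots \oplus V_k$ and aim to show that each $M_k$ is a $\Z[\Z^n]$-submodule with $M_k/M_{k-1} \cong (\Z[\Z^{n-k+1}])^{\oplus(n-k)}$ for $1 \leq k \leq n-1$; this is the filtration the theorem asserts (modulo the evident reindexing of the quotients).

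The key computation is the identity, valid in $[F_n,F_n]^{\ab}$ for $1 \leq \ell < i < j \leq n$:
\begin{equation*}
(x_\ell - 1)\, e_{ij} \;=\; (x_i - 1)\, e_{\ell j} \;-\; (x_j - 1)\, e_{\ell i}.
\end{equation*}
I will derive this by expanding $[x_\ell x_i, x_j]$ in two ways and equating: once directly as $[x_\ell,x_j]^{x_i}\cdot[x_i,x_j]$ using $[ab,c]=[a,c]^b[b,c]$, and once by first rewriting $x_\ell x_i = (x_i x_\ell)\cdot[x_\ell, x_i]$ in $F_n$ and then expanding. In the second expansion, the inner conjugation by $[x_\ell,x_i] \in [F_n,F_n]$ is trivial in $[F_n,F_n]^{\ab}$ (because the conjugation action factors through $\Z^n$), but the triple commutator $[[x_\ell,x_i], x_j]$ survives and produces the crucial $(x_j-1)e_{\ell i}$ term. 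Rearranged, the identity gives $x_\ell \cdot e_{ij} = e_{ij} + (x_i-1)e_{\ell j} - (x_j-1)e_{\ell i}$, exhibiting $x_\ell \cdot e_{ij}$ as an element of $V_i \oplus V_\ell$.

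With this identity in hand, closure of $M_k$ under $\Z[\Z^n]$ is immediate: the action of $x_a$ with $a \geq i$ preserves $V_i$, while for $\ell < i \leq k$ the identity places $x_\ell \cdot e_{ij}$ in $V_i \oplus V_\ell \subseteq M_k$. Specializing to $i = k$, the identity shows $(x_\ell - 1)e_{kj} \in V_\ell \subseteq M_{k-1}$ for every $\ell < k$, so $x_1,\ldots,x_{k-1}$ act trivially on $M_k/M_{k-1}$. Hence the $\Z[\Z^n]$-module structure on $M_k/M_{k-1}$ factors through $\Z[\Z^n]/(x_\ell - 1 : \ell < k) \cong \Z[\Z^{n-k+1}]$, which is exactly the projection of $\Z^n$ onto its final $n-k+1$ coordinates used in the theorem, and as such $M_k/M_{k-1} \cong V_k \cong (\Z[\Z^{n-k+1}])^{\oplus(n-k)}$. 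The main obstacle is the careful commutator bookkeeping needed to establish the identity above: one must remember that $[F_n,F_n]$ acts trivially on $[F_n,F_n]^{\ab}$ by conjugation, yet triple commutators $[[x_a,x_b],x_c]$ are definitely not zero there. Once the identity is secured, the rest is a clean application of Theorem \ref{maintheorem:commfnbasis}.
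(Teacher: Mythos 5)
Your proof is correct, but it follows a genuinely different route from the paper's. The paper argues by induction on $n$: the retraction of $F_n$ onto $F(T)$ with $T=\{x_2,\ldots,x_n\}$ gives a short exact sequence $0 \rightarrow \overline{B} \rightarrow [F_n,F_n]^{\ab} \rightarrow [F(T),F(T)]^{\ab} \rightarrow 0$ of $\Z[\Z^n]$-modules, Lemma \ref{lemma:recognizeprojection} together with Theorem \ref{maintheorem:commfnbasis} identifies $\overline{B}$ as a free $\Z[\Z^n]$-module of rank $n-1$, and the filtration is the pullback of the inductively constructed filtration of $[F(T),F(T)]^{\ab}$; no commutator calculations are needed. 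You instead exhibit the filtration all at once as the spans $M_k = V_1\oplus\cdots\oplus V_k$ of Tomaszewski basis vectors with left index at most $k$ (these are the same submodules the induction produces), and the work is transferred to the identity $(x_\ell-1)e_{ij}=(x_i-1)e_{\ell j}-(x_j-1)e_{\ell i}$ for $\ell<i<j$. That identity is correct with the paper's conventions (the action induced by $c \mapsto c^g$): expanding $[x_\ell x_i,x_j]$ both as $[x_\ell,x_j]^{x_i}[x_i,x_j]$ and, via $x_\ell x_i = x_i x_\ell [x_\ell,x_i]$, as $\left([x_i,x_j]^{x_\ell}[x_\ell,x_j]\right)^{[x_\ell,x_i]}[[x_\ell,x_i],x_j]$, then passing to $[F_n,F_n]^{\ab}$ where the inner conjugation dies and $[[x_\ell,x_i],x_j]$ becomes $(x_j-1)e_{\ell i}$, gives $x_i e_{\ell j}+e_{ij}=x_\ell e_{ij}+e_{\ell j}+(x_j-1)e_{\ell i}$, which rearranges to your formula (it also checks out in the Fox-calculus/Cayley-graph model of $[F_n,F_n]^{\ab}$). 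Two routine points should be written out: submodule closure also requires the inverse generators, which follows by multiplying the identity by $x_\ell^{-1}$ since each $V_\ell$ is stable under $x_\ell^{\pm 1}$; and the identity should be applied to the translated basis vectors $h\cdot e_{ij}$ with $h\in\langle x_i,\ldots,x_n\rangle$, which works because the group ring is commutative and $V_\ell$ is stable under $\langle x_\ell,\ldots,x_n\rangle$. As for what each approach buys: the paper's is calculation-free and recycles the lemmas already proved for Theorem \ref{maintheorem:commfnbasis}, while yours makes the module structure completely explicit --- the identity records exactly how $x_1,\ldots,x_{k-1}$ push $V_k$ into lower filtration steps, which is also relevant to the paper's question about the nature of these extensions.
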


\begin{question}
What is the nature of the extensions in the filtration in Theorem \ref{maintheorem:commfnmod}?  Do any
of them split as $\Z[\Z^n]$-modules?
\end{question}

For surface groups, we relate the structures of $[\pi_1(\Sigma_g),\pi_1(\Sigma_g)]^{\ab}$
and $[F_{2g},F_{2g}]^{\ab}$ as follows:

\begin{maintheorem}
\label{maintheorem:commsurfmod}
For all $g \geq 1$, we have a short exact sequence
\[0 \longrightarrow \Z[\Z^{2g}] \longrightarrow [F_{2g},F_{2g}]^{\ab} \longrightarrow [\pi_1(\Sigma_g),\pi_1(\Sigma_g)]^{\ab} \longrightarrow 0\]
of $\Z[\Z^{2g}]$-modules.
\end{maintheorem}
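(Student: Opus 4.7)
I plan to prove Theorem \ref{maintheorem:commsurfmod} by a purely topological argument using the long exact sequence of a pair of covering spaces. Give $\Sigma_g$ the standard CW structure whose $1$-skeleton is $X = \bigvee_{i=1}^{2g} S^1$ (with $\pi_1(X) = F_{2g}$) and whose unique $2$-cell is attached along the surface relator $r = [x_1,x_2][x_3,x_4]\cdots[x_{2g-1},x_{2g}]$. Let $p \colon \tSigma \to \Sigma_g$ be the maximal abelian cover, i.e.\ the cover corresponding to the subgroup $[\pi_1(\Sigma_g),\pi_1(\Sigma_g)]$, whose deck group is $H_1(\Sigma_g) \cong \Z^{2g}$.

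The key geometric step is to identify the $1$-skeleton $p^{-1}(X) \subset \tSigma$, with its induced CW structure, canonically and $\Z^{2g}$-equivariantly with the maximal abelian cover $\tX \to X$ (the cover corresponding to $[F_{2g},F_{2g}]$). Since $p^{-1}(X)$ is a connected cover of $X$, it corresponds to a subgroup of $F_{2g}$: a loop in $X$ lifts to a loop in $p^{-1}(X)$ precisely when its lift to $\tSigma$ closes up, which happens if and only if its image in $\pi_1(\Sigma_g)$ lies in $[\pi_1(\Sigma_g),\pi_1(\Sigma_g)]$. So the subgroup is the preimage of $[\pi_1(\Sigma_g),\pi_1(\Sigma_g)]$ under $F_{2g} \to \pi_1(\Sigma_g)$. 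Because $r$ is a product of commutators, the composite $F_{2g} \to \pi_1(\Sigma_g) \to \Z^{2g}$ equals the abelianization map, whose kernel is $[F_{2g},F_{2g}]$, identifying $p^{-1}(X)$ with $\tX$. The deck actions match because both are induced by the quotient $\pi_1 \to \Z^{2g}$.

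I would then apply the long exact sequence of the pair $(\tSigma,\tX)$, which is a sequence of $\Z[\Z^{2g}]$-modules via the deck action:
\[H_2(\tSigma) \longrightarrow H_2(\tSigma,\tX) \longrightarrow H_1(\tX) \longrightarrow H_1(\tSigma) \longrightarrow H_1(\tSigma,\tX).\]
The standard identification of $H_1$ of a regular cover with the abelianization of its fundamental group (the deck action matching the conjugation action) gives $H_1(\tX) \cong [F_{2g},F_{2g}]^{\ab}$ and $H_1(\tSigma) \cong [\pi_1(\Sigma_g),\pi_1(\Sigma_g)]^{\ab}$. The relative cellular chain complex of $(\tSigma,\tX)$ is concentrated in degree $2$ as the free $\Z[\Z^{2g}]$-module of rank one on the lifts of the single $2$-cell, so $H_2(\tSigma,\tX) \cong \Z[\Z^{2g}]$ and $H_1(\tSigma,\tX) = 0$. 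Finally, $H_2(\tSigma) = 0$ because $\tSigma$ is a connected noncompact $2$-manifold (noncompact since $\Z^{2g}$ is infinite). Splicing these identifications into the five-term sequence above produces precisely the short exact sequence of Theorem \ref{maintheorem:commsurfmod}.

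The only step with any real content is the identification of $p^{-1}(X)$ with $\tX$ together with the matching of deck actions; after that, the argument is a transparent application of the long exact sequence of a pair combined with the vanishing of $H_2$ for a noncompact surface.
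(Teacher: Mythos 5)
Your proposal is correct, and while it rests on the same key vanishing as the paper ($\HH_2$ of a connected noncompact surface is zero), it packages the argument differently. The paper first sets up the short exact sequence algebraically: it takes $R \subset [F_{2g},F_{2g}]^{\ab}$ to be the $\Z[\Z^{2g}]$-span of the class of the relator $r$, notes that $R$ is precisely the kernel of $[F_{2g},F_{2g}]^{\ab} \rightarrow [\pi_1(\Sigma_g),\pi_1(\Sigma_g)]^{\ab}$, and then proves $R \cong \Z[\Z^{2g}]$ by realizing $F_{2g}$ as the fundamental group of the compact surface $\Sigma_g^1$ with one boundary component, passing to the cover with fundamental group $[F_{2g},F_{2g}]$, identifying $R$ with the image of $\HH_1(\partial \tSigma) \cong \Z[\Z^{2g}]$, and getting injectivity from $\HH_2(\tSigma,\partial\tSigma)=0$, which it obtains by collapsing the boundary circles to points to produce a noncompact surface. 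You instead work with the closed surface and its maximal abelian cover, take the pair consisting of the cover and the preimage $\tX$ of the $1$-skeleton, and read off the entire short exact sequence from the long exact sequence of that pair, with the $\Z[\Z^{2g}]$ term appearing as the relative $\HH_2$ computed cellularly from the single $2$-cell. Your route avoids the algebraic identification of the kernel (exactness at both ends comes out of one long exact sequence), at the cost of the covering-space bookkeeping identifying $p^{-1}(X)$ with $\tX$; the paper's route makes its remark that the kernel is generated by the surface relation and its conjugates completely transparent. The two pairs are essentially excision-equivalent, so the underlying topology is the same. One point you assert rather than prove is the connectivity of $p^{-1}(X)$: this is immediate because the pulled-back cover of $X$ has monodromy the translation action of $\Z^{2g}$ on itself via the surjection $F_{2g} \rightarrow \Z^{2g}$, which is transitive, but it merits a sentence.
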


\begin{remark}
As one would expect, the $\Z[\Z^{2g}]$-submodule identified by Theorem \ref{maintheorem:commsurfmod} is generated
by the image of the surface relation and its conjugates.
\end{remark}

\begin{question}
Does the exact sequence of $\Z[\Z^{2g}]$-modules in Theorem \ref{maintheorem:commsurfmod} split?
\end{question}

\paragraph{Outline.}
The rest of this paper is divided into two sections: \S \ref{section:free} contains the proofs
of Theorems \ref{maintheorem:commfnbasis}, \ref{maintheorem:homologycalcfn}, and \ref{maintheorem:commfnmod},
and \S \ref{section:surf} contains the proofs of Theorems \ref{maintheorem:commsurfbasis},
\ref{maintheorem:homologycalcsurf}, and \ref{maintheorem:commsurfmod}.

\paragraph{Conventions.}
For a set $S$, we will let $F(S)$ denote the free group on $S$.  For $S = \{x_1,\ldots,x_n\}$, we
thus have $F(S) = F_n$.  For a group $G$ and $x,y \in G$, we define $[x,y] = x^{-1} y^{-1} x y$ and $x^y = y^{-1} x y$.  This
latter convention ensures that for $x,y,z \in G$ we have $(x^y)^z = x^{yz}$.

\section{Commutator subgroup of free group}
\label{section:free}

In this section, we prove Theorems \ref{maintheorem:commfnbasis}, \ref{maintheorem:homologycalcfn}, 
and \ref{maintheorem:commfnmod}.  

\subsection{Free generating set (Theorem \ref{maintheorem:commfnbasis})}
\label{section:freefn}

We start with some preliminary lemmas.  We emphasize in 
these lemmas that we allow free groups to have infinite rank.  Our first lemma
will only be used in an example in this section, but will be needed in a more serious
way when we discuss the commutator subgroup of a surface group.

\begin{lemma}
\label{lemma:easyargument}
Let $T$ be a set and let $S \subset T$.  The normal closure of $S$ in $F(T)$ is then freely
generated by
\[\Set{$s^w$}{$s \in S$ and $w \in F(T \setminus S)$}.\]
\end{lemma}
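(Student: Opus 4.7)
The plan is covering-space-theoretic. Let $X = \bigvee_{t \in T} S^1$ be a wedge of circles, one for each element of $T$, so that $\pi_1(X) = F(T)$. Write $N \triangleleft F(T)$ for the normal closure of $S$. The crucial algebraic input is the identification $F(T)/N \cong F(T \setminus S)$: quotienting a free group by the normal closure of a subset of its free generators produces the free group on the complementary generators.

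I would then construct the regular cover $\tX \to X$ corresponding to $N$ explicitly. Its vertices are indexed by the deck group $F(T \setminus S)$. At each vertex $g$ one attaches, for every $t \in T \setminus S$, an edge labeled $t$ running from $g$ to $gt$, and for every $s \in S$, a loop labeled $s$ at $g$ (a loop because $s$ vanishes in the quotient). The edges labeled by elements of $T \setminus S$ assemble into the Cayley graph of $F(T \setminus S)$, which is a tree since $F(T \setminus S)$ is free.

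Choose this Cayley graph as the maximal tree in $\tX$; the complementary edges are then precisely the loops labeled by elements of $S$, one at each vertex $g$. The standard computation of the fundamental group of a graph presents $N = \pi_1(\tX, \tast)$ as a free group with one generator per non-tree edge, namely the loop that runs along the tree from $\tast$ to $g$, then around the $s$-loop at $g$, then back along the tree. Pushed down to $X$, this loop reads the word $g \cdot s \cdot g^{-1}$, which under the paper's convention $s^w = w^{-1} s w$ equals $s^{g^{-1}}$. Reindexing by $w = g^{-1}$, which still ranges over all of $F(T \setminus S)$ as $g$ does, recovers exactly the stated free generating set.

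The only delicate point is the bookkeeping with conjugation conventions and basepoint choices; otherwise the argument is routine covering space theory, and no serious obstacle arises.
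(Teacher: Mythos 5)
Your argument is correct and is exactly the route the paper takes: its proof of this lemma is a one-line appeal to covering space theory applied to the wedge of $|T|$ circles, and your proposal simply fills in that standard argument (the cover with deck group $F(T\setminus S)$, the Cayley-graph tree as maximal tree, and the non-tree $s$-loops giving the conjugates $s^w$). The bookkeeping with the convention $s^w = w^{-1}sw$ and the reindexing $w = g^{-1}$ is handled correctly, so there is nothing to add.
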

\begin{proof}
This is a standard consequence of covering space theory applied to a wedge of $|T|$ circles, whose
fundamental group can be identified with $F(T)$.
\end{proof}

\begin{lemma}
\label{lemma:recognizefree}
Let $F$ be a free group and let $S \subset F$ be a generating set that projects to a basis for the
free abelian group $F^{\ab}$.  Then $F$ is freely generated by $S$.
\end{lemma}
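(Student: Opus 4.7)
The plan is to introduce the natural homomorphism $\phi \colon F(S) \to F$ from the abstract free group on the set $S$ (sending each generator to the corresponding element of $F$) and to prove that $\phi$ is an isomorphism. Surjectivity is immediate from the assumption that $S$ generates $F$. Setting $K = \ker(\phi)$, the hypothesis that $S$ projects to a basis of $F^{\ab}$ says that $\phi^{\ab} \colon F(S)^{\ab} \to F^{\ab}$ is an isomorphism of free abelian groups; in particular $K \subset [F(S), F(S)]$. The real content of the lemma is to upgrade this to $K = 1$, which is not formal: infinite-rank free groups are not Hopfian, so a bare rank count cannot suffice.

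To close the gap, I would apply the Stallings five-term exact sequence to the short exact sequence $1 \to K \to F(S) \to F \to 1$:
\[
\HH_2(F) \longrightarrow K/[K, F(S)] \longrightarrow F(S)^{\ab} \longrightarrow F^{\ab} \longrightarrow 0.
\]
Since $F$ is free, $\HH_2(F) = 0$, and since the rightmost arrow is an isomorphism, this forces $K/[K, F(S)] = 0$, i.e.\ $K = [K, F(S)]$. An easy induction along the lower central series then shows $K \subset \gamma_n(F(S))$ for every $n$: if $K \subset \gamma_n(F(S))$, then $K = [K, F(S)] \subset [\gamma_n(F(S)), F(S)] = \gamma_{n+1}(F(S))$, with the base case $K \subset \gamma_2(F(S))$ already established.

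The argument finishes by invoking Magnus's theorem that free groups are residually nilpotent, giving $\bigcap_n \gamma_n(F(S)) = 1$ and hence $K = 1$. The main obstacle is precisely this final transition from an abelianization-level statement to one about the full group; residual nilpotence (equivalently, the free Lie algebra structure on the associated graded of the lower central series, on which $\phi$ must be an isomorphism since it is so in degree one) is the cleanest tool for making this leap. Everything else in the proof is soft: surjectivity and the abelianization claim are direct consequences of the hypotheses.
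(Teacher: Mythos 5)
Your proof is correct, but it takes a genuinely different route from the paper's. You study the natural surjection $\phi\colon F(S)\to F$ and kill its kernel $K$ homologically: the five-term exact sequence for $1\to K\to F(S)\to F\to 1$, together with $\HH_2(F)=0$ (since $F$ is free) and the fact that $\phi^{\ab}$ is an isomorphism, gives $K=[K,F(S)]$; the lower central series induction then places $K$ inside $\bigcap_n\gamma_n(F(S))$, which is trivial by residual nilpotence of free groups (and this does hold in arbitrary rank, since any element lies in a finite-rank free factor onto which there is a retraction carrying $\gamma_n$ into $\gamma_n$). The paper argues differently: a purported nontrivial relation among elements of $S$ involves only finitely many of them, so it suffices to show each finite subset $S'\subset S$ freely generates the subgroup $F'$ it generates; since the images of $S'$ in $F^{\ab}$ are linearly independent and generate $(F')^{\ab}$, the group $F'$ is free of rank $|S'|$, and the surjection $F(S')\to F'$ between free groups of the same finite rank is an isomorphism because finite-rank free groups are Hopfian. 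So the paper trades your inputs (vanishing of $\HH_2$ of a free group, the five-term sequence, and Magnus's residual nilpotence) for Hopficity of finitely generated free groups plus a reduction to finite subsets. Your observation that infinite-rank free groups are non-Hopfian correctly pinpoints why some extra ingredient is needed; your argument handles all ranks uniformly with no finiteness reduction, while the paper's stays closer to elementary combinatorial group theory at the cost of the finite-subset step.
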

\begin{proof}
To show that no nontrivial reduced word in $S$ is trivial in $F$, it suffices to prove that
all finite subsets $S' \subset S$ freely generate the subgroup $F'$ they generate.
The composition
\[F' \longrightarrow (F')^{\ab} \longrightarrow F^{\ab}\]
takes $S'$ to a linearly independent subset of $F^{\ab}$.  Since the image of $S'$ in $(F')^{\ab}$ generates it, we
deduce that $(F')^{\ab}$ is a free abelian group of rank $|S'|$, so $F'$ is a free group of rank $|S'|$.
Thus the map $F(S') \rightarrow F'$ induced by the inclusion $S' \hookrightarrow F'$
is a surjective map between free groups of the same finite rank.  Since finite-rank free groups are
Hopfian\footnote{A group $G$ is Hopfian if all surjective homomorphisms $G \rightarrow G$ are isomorphisms.
More generally, Malcev (\cite{Malcev}; see \cite[Theorem IV.4.10]{LyndonSchupp} for the short proof)
proved that finitely generated residually finite groups are Hopfian.  See \cite{MathOverflowFree} for a
variety of proofs that free groups are residually finite.  My favorite is in \cite{MalesteinPutman}, which
gives a stronger result and builds on a proof of Hempel \cite{HempelResiduallyFinite}.}
the map $F(S') \rightarrow F'$ must be an isomorphism, so $F'$ is freely generated by $S'$.
\end{proof}

\begin{lemma}
\label{lemma:improvesemidirect}
Let $F$ be a free group, let $A < F$ be a subgroup, and let $B \lhd F$ be a normal subgroup such that
$F = B \rtimes A$.  Assume that
$B$ is is freely generated by a set $S$ on which $A$ acts freely by conjugation, 
and let $S' < S$ contain a single element from each
$A$-orbit.  We then have $F = F(S') \ast A$.
\end{lemma}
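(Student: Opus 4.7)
The plan is to build the claimed isomorphism explicitly and to construct an inverse using the semidirect product structure. Let $\Phi \colon F(S') \ast A \to F$ be the natural homomorphism coming from the universal property of the free product, induced by the inclusions $S' \hookrightarrow B \subset F$ and $A \subset F$. I would first verify surjectivity: the image of $\Phi$ contains $A$ and every $A$-conjugate of every element of $S'$. Because $A$ acts freely on $S$ with $S'$ a set of orbit representatives, every element of $S$ is such a conjugate; thus the image contains $S$, hence $B = F(S)$, hence all of $F = BA$.

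For injectivity, I would construct an inverse $\Psi \colon F \to F(S') \ast A$. Using $F = B \rtimes A$, it suffices to specify $\Psi$ on $A$ and on $B$ and verify the semidirect-product compatibility. Let $\Psi|_A$ be the canonical inclusion $A \hookrightarrow F(S') \ast A$. To define $\Psi|_B$, exploit that $B$ is free on $S$: given $s \in S$, the free action of $A$ determines a unique pair $(s', a) \in S' \times A$ with $s = (s')^a$ inside $F$, and I set $\Psi(s) = (s')^a$ computed inside $F(S') \ast A$. This extends to a homomorphism on $B$ by the universal property of the free group on $S$.

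The remaining task is to check that $\Psi|_B$ is $A$-equivariant, i.e., that $\Psi(a s a^{-1}) = \Psi(a)\Psi(s)\Psi(a)^{-1}$ for all $a \in A$ and $s \in S$; this is precisely what is needed to extend $\Psi|_A$ and $\Psi|_B$ to a homomorphism on the semidirect product. Writing $s = (s_0')^{a_0}$ with $s_0' \in S'$ and $a_0 \in A$, a direct conjugation computation (using $(s')^a = a^{-1} s' a$) gives $a s a^{-1} = (s_0')^{a_0 a^{-1}}$, and substituting into the definition of $\Psi|_B$ and regrouping yields the desired identity. Finally, $\Phi$ and $\Psi$ agree with identity maps on the generating sets $S' \cup A$ and $A \cup S$ respectively, so they are mutually inverse and $\Phi$ is the claimed isomorphism. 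The only substantive issue is bookkeeping around the conjugation convention; beyond that, the argument is a formal consequence of the universal properties of $F(S)$ and $F(S') \ast A$ together with the hypothesis that $A$ acts freely on $S$.
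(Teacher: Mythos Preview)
Your argument is correct and takes a genuinely different route from the paper's proof. The paper fixes a free generating set $S''$ for $A$, observes that $S' \cup S''$ generates $F$, and then invokes the earlier recognition lemma (Lemma~\ref{lemma:recognizefree}): since $F^{\ab} \cong (B^{\ab})_A \oplus A^{\ab}$ and $S'$ maps to a basis of the coinvariants $(B^{\ab})_A$ (precisely because the $A$-action on $S$ is free), the set $S' \cup S''$ maps to a basis of $F^{\ab}$, hence freely generates $F$ by the Hopfian property of finite-rank free groups. Your approach instead builds an explicit inverse $\Psi$ using only the universal properties of $F(S)$ and of the semidirect product, together with the uniqueness of the decomposition $s = (s')^a$ coming from the free action.

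What each buys: the paper's argument is shorter and reuses a clean abelianization criterion, but it leans on the Hopfian property (hence implicitly on residual finiteness) and on $F$ being free. Your construction is more elementary and in fact never uses the hypothesis that $F$ is free; it shows directly that any group with the stated semidirect product structure is isomorphic to $F(S') \ast A$. The equivariance check you flagged is indeed the only place care is required, and your computation $a s a^{-1} = (s_0')^{a_0 a^{-1}}$ handles it correctly under the paper's convention $x^y = y^{-1}xy$.
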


To help the reader understand this lemma, we give an example before the proof.

\begin{example}
Fix some $1 \leq k < n$ and let $A < F_n$ be the subgroup generated by $\{x_{k+1},\ldots,x_n\}$.
Define $r\colon F_n \rightarrow A$ via the formula
\[f(x_i) = \begin{cases}
1 & \text{if $1 \leq i \leq k$},\\
x_i & \text{if $k+1 \leq i \leq n$}.\end{cases}\]
The homomorphism $r$ is a retraction.  Letting $B = \ker(r)$, we thus have
$F_n = B \rtimes A$.  The group $B$ is the normal closure of
$S' = \{x_1,\ldots,x_k\}$, so Lemma \ref{lemma:easyargument}
says that $B$ is freely generated by
\[S = \Set{$x_i^w$}{$1 \leq i \leq k$ and $w \in A$}.\]
The conjugation action of $A$ on $B$ restricts to a free action on $S$, and
$S' \subset S$ 
contains a single element from each $A$-orbit.  As is asserted in the lemma, we have
$F_n = F(S') \ast A$.
\end{example}

\begin{proof}[Proof of Lemma \ref{lemma:improvesemidirect}]
Let $S'' \subset A$ be a free generating set for $A$.  The set $S' \cup S''$ generates
$F$, and we must prove that it freely generates it.  Since $F = B \rtimes A$, we have
\[F^{\ab} = (B^{\ab})_A \oplus A^{\ab},\]
where the subscript indicates that we are taking the $A$-coinvariants, i.e.\ the largest quotient
on which $A$ acts trivially.  Since $A$ acts freely on $S$ and $S'$ contains a single element from each $A$-orbit,
it follows that $S' \subset B$ projects to a free basis for the free abelian group $(B^{\ab})_A$.  
Since $S''$ projects to a free basis for the free abelian group $A^{\ab}$, we conclude that $S' \cup S''$
projects to a free basis for the free abelian group $F^{\ab}$.  Lemma \ref{lemma:recognizefree} then
implies that $S' \cup S''$ is a free generating set for $F$. 
\end{proof}

Our final lemma contains the geometric heart of Theorem \ref{maintheorem:commfnbasis}.

\begin{lemma}
\label{lemma:recognizeprojection}
Fix some $n \geq 2$.  Let $K \lhd F_n$ be the normal closure of $x_1$ and let $B = K \cap [F_n,F_n]$.
Set $T = \{x_2,\ldots,x_n\}$.  Then $B$ is freely generated by
\[\Set{$[x_1,x_j]^{x_1^{k_1} w}$}{$1 < j \leq n$ and $k_1 \in \Z$ and $w \in F(T)$}.\]
\end{lemma}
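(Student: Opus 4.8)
The plan is to deduce this from Lemma~\ref{lemma:recognizefree} applied to the free group $B$: writing
$S=\{[x_1,x_j]^{x_1^{k_1}w} : 1<j\le n,\ k_1\in\Z,\ w\in F(T)\}$ for the proposed set, it suffices to prove (a) that $S$ generates $B$, and (b) that $S$ maps to a basis of the free abelian group $B^{\ab}$. First I record the basic set-up. By Lemma~\ref{lemma:easyargument}, $K$ is freely generated by $Y=\{x_1^w : w\in F(T)\}$, and $B=K\cap[F_n,F_n]$ is the kernel of the homomorphism $\epsilon\colon K\to\Z$ sending every $x_1^w$ to $1$ (the restriction to $K$ of $F_n\to F_n^{\ab}=\Z^n\to\Z$, projection to the $x_1$-coordinate). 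Since $B$ is an intersection of normal subgroups, $B\lhd F_n$; moreover $B$ is the normal closure in $F_n$ of $\{[x_1,x_j] : 1<j\le n\}$ with $F_n/B\cong \langle x_1\rangle\times F(T)=:Q$, because $F_n/\langle\langle[x_1,x_j] : 1<j\le n\rangle\rangle_{F_n}$ is exactly the presentation $\langle x_1,\dots,x_n\mid [x_1,x_j]\ (1<j\le n)\rangle$ of the direct product $Q$, and one checks that $\ker(F_n\twoheadrightarrow Q)$ equals $K\cap[F_n,F_n]$. In particular $B^{\ab}$ carries a $\Z[Q]$-module structure.

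For (b) I first produce an auxiliary free generating set for $B$ by a spanning-tree argument. The Cayley graph of $F(T)$ on $\{x_2,\dots,x_n\}$ is a tree; taking it as its own maximal tree converts the basis $Y$ of $K$ into the set $\{x_1\}\cup\{[x_1,x_j]^v : v\in F(T),\ 1<j\le n\}$, since $[x_1,x_j]^v=(x_1^v)^{-1}(x_1^{x_jv})$ is the ``edge element'' of the edge $\{v,x_jv\}$, and these together with the ``root'' $x_1=x_1^e$ generate $K$ and project to a basis of $K^{\ab}$, so Lemma~\ref{lemma:recognizefree} applies. As $\epsilon(x_1)=1$ and $\epsilon$ kills each $[x_1,x_j]^v$ (these lie in $[F_n,F_n]$), $B=\ker\epsilon$ is the normal closure in $K$ of $\{[x_1,x_j]^v : v\in F(T),\ 1<j\le n\}$, so Lemma~\ref{lemma:easyargument} (used inside $K$, with $\{x_1\}$ the complementary part of the basis) shows $B$ is freely generated by $\{[x_1,x_j]^{vx_1^{k}} : v\in F(T),\ k\in\Z,\ 1<j\le n\}$. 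Hence $B^{\ab}$ is free abelian on the classes of these elements. Now let $t\in Q$ be the image of $x_1$ and $c_j\in B^{\ab}$ the class of $[x_1,x_j]$; the class of $[x_1,x_j]^{vx_1^{k}}$ is $\gamma\cdot c_j$ where $\gamma$ is the image of $vx_1^k$ in $Q$ (or its inverse, per one's module conventions), and as $(v,k)$ runs over $F(T)\times\Z$ this $\gamma$ runs bijectively over $Q$. Therefore $\{\gamma\cdot c_j : \gamma\in Q,\ 1<j\le n\}$ is a $\Z$-basis of $B^{\ab}$, i.e.\ $B^{\ab}$ is a free $\Z[Q]$-module on $\{c_2,\dots,c_n\}$. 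The class of $[x_1,x_j]^{x_1^{k_1}w}$ is likewise $\delta\cdot c_j$ with $\delta$ the image of $x_1^{k_1}w$ in $Q$, and $(k_1,w)\mapsto\delta$ is again a bijection $\Z\times F(T)\to Q$; so $S$ maps bijectively onto the $\Z$-basis $\{\gamma\cdot c_j\}$, which is (b). (An alternative route to ``$B^{\ab}$ is $\Z[Q]$-free on $\{c_j\}$'': the presentation complex of $Q$ is $S^1\times\bigvee_{j=2}^nS^1$, hence aspherical; its contractible universal cover has cellular chain complex $\Z[Q]^{n-1}\xrightarrow{\partial_2}\Z[Q]^{n}\xrightarrow{\partial_1}\Z[Q]$ with $\ker\partial_2=0$ and $\ker\partial_1=\operatorname{im}\partial_2$, and $\ker\partial_1=H_1$ of the $1$-skeleton $=B^{\ab}$, so $\partial_2$ identifies $\Z[Q]^{n-1}$ with $B^{\ab}$, carrying the standard basis to $\{c_j\}$.)

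For (a), which I expect to be the real obstacle — it is here that the commutator arithmetic underlying Tomaszewski's theorem enters — I will show $\langle S\rangle\lhd F_n$; since $\langle S\rangle\ni[x_1,x_j]$ for all $j$ and $B$ is the normal closure of these, this forces $B\subseteq\langle S\rangle\subseteq B$. Conjugation by $x_i$ ($2\le i\le n$) sends $[x_1,x_j]^{x_1^{k_1}w}$ to $[x_1,x_j]^{x_1^{k_1}(wx_i^{\pm1})}\in S$, so $\langle S\rangle$ is invariant under these. Next, the identities $[x_1,uv]=[x_1,v]\,[x_1,u]^v$, $[x_1,x_j^{-1}]=([x_1,x_j]^{x_j^{-1}})^{-1}$, $[x_1^{-1},x_j]=([x_1,x_j]^{x_1^{-1}})^{-1}$ and $[x_1^{-1},x_j^{-1}]=[x_1,x_j]^{x_1^{-1}x_j^{-1}}$ express, for each $w\in F(T)$, the element $[x_1^{\pm1},w]$ as a product of elements of $S^{\pm1}$ conjugated by elements of $F(T)$; since $\langle S\rangle$ is $F(T)$-conjugation invariant, $[x_1^{\pm1},w]\in\langle S\rangle$. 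Finally, using $w x_1^{\pm1}=x_1^{\pm1}w\,[x_1^{\pm1},w]^{-1}$ we get $[x_1,x_j]^{x_1^{k_1}w x_1^{\pm1}}=\bigl([x_1,x_j]^{x_1^{k_1\pm1}w}\bigr)^{[x_1^{\pm1},w]^{-1}}$, an element of $S$ conjugated by an element of $\langle S\rangle$, hence in $\langle S\rangle$; so $\langle S\rangle$ is invariant under conjugation by $x_1^{\pm1}$ as well. As $F_n$ is generated by $\{x_1^{\pm1},\dots,x_n^{\pm1}\}$, it follows that $\langle S\rangle\lhd F_n$, proving (a). With (a) and (b) established, Lemma~\ref{lemma:recognizefree} shows that $B$ is freely generated by $S$.
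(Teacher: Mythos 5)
Your proof is correct, but it takes a genuinely different route from the paper's. The paper argues topologically: it identifies the cover of the wedge of circles corresponding to $B$ with the Cayley graph of $F(T)\times\Z$, decomposes that graph into ``ladders'' lying over the edges of the Cayley tree of $F(T)$, and applies Seifert--van Kampen together with an explicit maximal tree in each ladder, so the stated basis appears ladder by ladder with no commutator manipulations at all. You instead work algebraically: a tree-indexed change of basis in $K$ (justified via Lemma \ref{lemma:recognizefree}) followed by Lemma \ref{lemma:easyargument} applied inside $K$ gives the auxiliary free basis $\{[x_1,x_j]^{v x_1^{k}}\}$ of $B$, from which you read off that $B^{\ab}$ is a free $\Z[Q]$-module on the classes of the $[x_1,x_j]$ (with $Q = \Z\times F(T)$) and hence that your set $S$ projects bijectively to a $\Z$-basis of $B^{\ab}$; separately you show $\langle S\rangle$ is normal in $F_n$ using commutator identities, so it contains the normal closure $B$ of the $[x_1,x_j]$, and Lemma \ref{lemma:recognizefree} finishes. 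I checked the details: the identifications of $B$ as $\ker(F_n\to Q)$ and as the normal closure of $\{[x_1,x_j]\}$, the four identities, and the rewriting $wx_1^{\pm1}=x_1^{\pm1}w[x_1^{\pm1},w]^{-1}$ are all correct with the paper's conventions. As for what each approach buys: the paper's argument is calculation-free and visibly exhibits the basis as a union of maximal-tree bases, in keeping with its stated aim of avoiding Tomaszewski-style commutator calculus, a small amount of which your step (a) reintroduces; on the other hand, your argument needs no covering-space picture beyond Lemma \ref{lemma:easyargument}, and as a bonus it directly establishes the $\Z[\Z\times F(T)]$-module structure of $B^{\ab}$, which is essentially the key computation in the paper's proof of Theorem \ref{maintheorem:commfnmod}. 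One presentational remark: the claim that $\{x_1\}\cup\{[x_1,x_j]^{v}\}$ generates $K$ and maps to a basis of $K^{\ab}$ is stated tersely; it is the standard ``root plus edge differences along a tree'' change of basis (the same mechanism as the paper's Lemma \ref{lemma:basis}), and a one-line induction along geodesics from the identity would make it airtight.
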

\begin{proof}
Let $X$ be the wedge of $n$ circles and let $p \in X$ be the wedge point.  Identify $\pi_1(X,p)$ with
$F_n$.  Let $(\tX,\tp)$ be the cover of $(X,p)$ with $\pi_1(\tX,\tp) = B$.
The group $B$ is the kernel of the surjective homomorphism $f\colon F_n \rightarrow F(T) \times \Z$ defined
via the formula
\[f(x_i) = \begin{cases}
(1,1) & \text{if $i=1$},\\
(x_i,0) & \text{if $2 \leq i \leq n$}.\end{cases}\]
Here the $1$ in the first coordinate of $(1,1) \in F(T) \times \Z$ means the identity element in $F(T)$.
For $1 \leq i \leq n$, let $\ox_i = f(x_i)$.   We can 
identify $\tX$ with the Cayley graph of $F(T) \times \Z$ with respect to the generating set $\{\ox_1,\ldots,\ox_n\}$.
Using this identification, the set of vertices of $\tX$ is identified with $F(T) \times \Z$ and the edges
of $\tX$ are oriented and labeled with elements of $\{\ox_1,\ldots,\ox_n\}$.  The basepoint $\tp$ of $\tX$ is $(1,0) \in F(T) \times \Z$.

Let $Y$ be the Cayley graph of $F(T)$ with respect to $T$.  Identify $Y$ with the full subgraph
of $\tX$ whose vertices are $F(T) \times 0 \subset F(T) \times \Z$.  The projection $F(T) \times \Z \rightarrow F(T)$
induces a surjective map $\rho\colon \tX \rightarrow Y$.  For each edge $e$ of $Y$, let $L_e = \rho^{-1}(e)$.
If $e$ goes from $u \in F(T)$ to $v \in F(T)$, then $L_e$ is the ``ladder'' depicted in Figure \ref{figure:ladder}.a.
The graph $\tX$ is the union of the $L_e$ as $e$ ranges over the edges of $Y$.

\Figure{figure:ladder}{Ladder}{(a) The ``ladder'' $L_e$ corresponding to the edge $e$.
(b) The space $\hL_e$ with the edge $e$ pointing towards the basepoint, along with the
corresponding maximal tree.
(c) The space $\hL_e$ with the edge $e$ pointing away from the basepoint, along with the
corresponding maximal tree.
}{82}

We want to apply the Seifert--van Kampen theorem to express $\pi_1(\tX,\tp) = B$ in terms of the fundamental
groups of the $L_e$.  For this, we will have to enlarge the $L_e$ so that they include the basepoint $\tp = (1,0)$.
For an edge $e$ of $Y$, let $\hL_e$ be the union of $L_e$ and the shortest edge path in
$Y \subset \tX$ that starts at the basepoint $\tp = (1,0)$ and ends with $e$.  The graph $\hL_e$ contains
$\tp$ and deformation retracts to $L_e$.  

All finite intersections of more than one
of the $\hL_e$ are contractible, so Seifert--van Kampen applies to show that
\[B = \pi_1(\tX,\tp) = \BigFreeProd_{e} \pi_1(\hL_e,\tp).\]
Letting
\[S = \Set{$[x_1,x_j]^{x_1^{k_1} w}$}{$1 < j \leq n$ and $k_1 \in \Z$ and $w \in F(T)$}\]
be our purported free generating set for $B$, it is thus enough to find a subset $S_e \subset S$ for each
edge $e$ such that $\pi_1(\hL_e,\tp)$ is freely generated by $S_e$ and such that
\[S = \bigcup_e S_e.\]
Fix an edge $e$ of $Y$.  Let $\ox_j$ be the label on $e$ and let $(w,0) \in F(T) \times \Z$ be the endpoint of $e$.
We claim that the subgroup $\pi_1(\hL_e,\tp)$ of $\pi_1(\tX,\tp) = B$ is freely generated by
\[S_e = \Set{$[x_1,x_j]^{x_1^{k_1} w^{-1}}$}{$k_1 \in \Z$}.\]
Here we emphasize that our convention is that
\[[x_1,x_j]^{x_1^{k_1} w^{-1}} = w x_1^{-k_1} x_1^{-1} x_j^{-1} x_1 x_j x_1^{k_1} w.\]
The proof of this claim divides into two cases:
\begin{itemize}
\item The first is that $e$ points towards the basepoint as in Figure \ref{figure:ladder}.b.  The purported
free generating set is then the one associated to the maximal tree
depicted in Figure \ref{figure:ladder}.b.
\item The second is that $e$ points away from the basepoint as in Figure \ref{figure:ladder}.c.  The purported
free generating set is then the one associated to the maximal tree
depicted in Figure \ref{figure:ladder}.c.
\end{itemize}
Since $S$ is the union of the $S_e$, the proof is complete.
\end{proof}

We now prove Theorem \ref{maintheorem:commfnbasis}.

\begin{proof}[Proof of Theorem \ref{maintheorem:commfnbasis}]
Recall that we must prove that the group $[F_n,F_n]$ is freely generated by
\[\Set{$[x_i,x_j]^{x_i^{k_i} x_{i+1}^{k_{i+1}} \cdots x_n^{k_n}}$}{$1 \leq i < j \leq n$ and $k_i,\ldots,k_n \in \Z$}.\]
The proof will be by induction on $n$.  The base case $n=1$ is trivial since $[F_1,F_1] = 1$ and the purported
free generating set is empty.  

Assume now that $n>1$ and that the result is true for $[F_{n-1},F_{n-1}]$.  Set
\[T = \{x_2,\ldots,x_n\} \subset F_n,\]
and let $r\colon F_n \rightarrow F(T)$ be the retraction defined via the formula
\[r(x_i) = \begin{cases}
1 & \text{if $i=1$},\\
x_i & \text{if $2 \leq i \leq n$}.\end{cases}\]
Set $K = \ker(r)$, so $K$ is the normal closure of $x_1$.  We have a split
short exact sequence
\begin{equation}
\label{eqn:splitft}
1 \longrightarrow K \longrightarrow F_n \stackrel{r}{\longrightarrow} F(T) \longrightarrow 1,
\end{equation}
so $F_n = K \rtimes F(T)$.  

Since $r([F_n,F_n]) = [F(T),F(T)]$, the exact sequence \eqref{eqn:splitft} restricts to a
short exact sequence
\[1 \longrightarrow K \cap [F_n,F_n] \longrightarrow [F_n,F_n] \stackrel{r}{\longrightarrow} [F(T),F(T)] \longrightarrow 1.\]
This short exact sequence also splits.  Letting
\[A = [F(T),F(T)] \quad \text{and} \quad B = K \cap [F_n,F_n],\]
we therefore have $[F_n,F_n] = B \rtimes A$.

By Lemma \ref{lemma:recognizeprojection}, the group $B$ is freely generated by
\[S = \Set{$[x_1,x_j]^{x_1^{k_1} w}$}{$1 < j \leq n$ and $k_1 \in \Z$ and $w \in F(T)$}.\]
The conjugation action of $F(T)$ on $B$ preserves $S$ and acts freely on it.  Restrict this
action to $A = [F(T),F(T)]$.  The group $A$ still acts freely on $S$, and the set
\[S' = \Set{$[x_1,x_j]^{x_1^{k_1} x_2^{k_2} \cdots x_n^{k_n}}$}{$1 < j \leq n$ and $k_1,\ldots,k_n \in \Z$}\]
contains exactly one representative from each orbit of the action of $A$ on $S$.  Lemma
\ref{lemma:improvesemidirect} then implies that
\[[F_n,F_n] = F(S') \ast A.\]
By induction, $A = [F(T),F(T)]$ is freely generated by
\[S'' = \Set{$[x_i,x_j]^{x_i^{k_i} x_{i+1}^{k_{i+1}} \cdots x_n^{k_n}}$}{$2 \leq i < j \leq n$ and $k_i,\ldots,k_n \in \Z$}.\]
We conclude that $[F_n,F_n]$ is freely generated by $S' \cup S''$, as desired. 
\end{proof}

\subsection{Module structure (Theorem \ref{maintheorem:commfnmod})}
\label{section:modfn}

We start with the following observation.

\begin{lemma}
\label{lemma:commutator}
Let $G$ be a group and let $x,y,w,w' \in G$.  Assume that $w$ and $w'$ map to the same
element of $G^{\ab}$.  Then $[x,y]^w$ and $[x,y]^{w'}$ map to the same element of $[G,G]^{\ab}$.
\end{lemma}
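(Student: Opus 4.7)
The plan is to reduce the claim to the observation that $[G,G]$ acts trivially by conjugation on its own abelianization, which is essentially definitional.

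First I would write $w' = wc$ for some $c \in [G,G]$; this is possible precisely because $w$ and $w'$ have the same image in $G^{\ab} = G/[G,G]$. Using the convention $(a^b)^c = a^{bc}$ stated at the end of the Conventions paragraph, this gives
\[
[x,y]^{w'} \;=\; [x,y]^{wc} \;=\; \bigl([x,y]^w\bigr)^c.
\]
So it suffices to show that for any $u \in [G,G]$ and any $c \in [G,G]$, the elements $u$ and $u^c$ have the same image in $[G,G]^{\ab}$; then applied to $u = [x,y]^w \in [G,G]$ (which lies in $[G,G]$ because $[G,G]$ is normal in $G$), this finishes the lemma.

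For that reduced statement, I would simply compute
\[
u^{-1} \cdot u^c \;=\; u^{-1} c^{-1} u c \;=\; [u,c].
\]
Since $u \in [G,G]$ and $c \in [G,G]$, the commutator $[u,c]$ lies in $[[G,G],[G,G]]$, which is the kernel of the abelianization map $[G,G] \to [G,G]^{\ab}$. Hence $u$ and $u^c$ have the same image in $[G,G]^{\ab}$, completing the proof.

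There is no real obstacle here: the whole content of the lemma is the bookkeeping identity $(a^b)^c = a^{bc}$ together with the tautology that conjugation by a commutator is trivial on the abelianization. Conceptually, the lemma is just recording that the conjugation action of $G$ on $[G,G]^{\ab}$ factors through $G^{\ab}$, which is the very reason $[G,G]^{\ab}$ is being treated as a $\Z[G^{\ab}]$-module throughout the paper.
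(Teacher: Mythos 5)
Your proof is correct and is essentially the paper's own argument: both write $w' = wc$ with $c \in [G,G]$ and note that conjugation by $c$ is invisible in $[G,G]^{\ab}$, you via $u^{-1}u^c = [u,c] \in [[G,G],[G,G]]$ and the paper via commutativity in the abelianization. No issues.
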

\begin{proof}
For $z,z' \in [G,G]$, write $z \equiv z'$ if $z$ and $z'$ map to the same element of $[G,G]^{\ab}$.
We can write $w' = w c$ with $c \in [G,G]$, so
\[[x,y]^{w'} = c^{-1} [x,y]^w c \equiv c^{-1} c [x,y]^w = [x,y]^w.\qedhere\]
\end{proof}

This allows us to make the following definition.

\begin{definition}
Let $G$ be a group.  For $x,y \in G$ and $h \in G^{\ab}$, write $\{x,y\}^h$ for the image
in $[G,G]^{\ab}$ of $[x,y]^w$, where $w \in G$ is any element projecting to $h$.
\end{definition}

We now launch into the proof of Theorem \ref{maintheorem:commfnmod}.

\begin{proof}[Proof of Theorem \ref{maintheorem:commfnmod}]
Recall that the theorem in question asserts that for all $n \geq 1$, the
$\Z[\Z^n]$-module $[F_n,F_n]^{\ab}$ has a sequence
\[0 = M_0 \subset M_1 \subset \cdots \subset M_{n-1} = [F_n,F_n]^{\ab}\]
of submodules such that
\[M_k/M_{k-1} \cong \left(\Z\left[\Z^{n-k+1}\right]\right)^{\oplus (n-k)}\]
for $1 \leq k \leq n-1$.  The proof of this will be by induction on $n$.
The base case $n=1$ is trivial since $[F_1,F_1] = 1$ and the theorem
simply asserts that $[F_1,F_1]^{\ab} = 0$.

Assume now that $n>1$ and that the theorem is true for $F_{n-1}$.  Let $T = \{x_2,\ldots,x_n\}$ and
let $r\colon F_n \rightarrow F(T)$ be the evident retract whose kernel is the normal closure of $x_1$.  The
map $r$ restricts to a surjective homomorphism $p\colon [F_n,F_n] \rightarrow [F(T),F(T)]$.  Let
$B = \ker(p)$.  Let $\op \colon [F_n,F_n]^{\ab} \rightarrow [F(T),F(T)]^{\ab}$ be the map induced
by $p$ and let $\oB = \ker(\orho)$.  

The map $r$ descends to the projection $\Z^n \rightarrow \Z^{n-1}$ onto the last
$(n-1)$ coordinates.  Use this projection to make $[F(T),F(T)]^{\ab}$ into a module
over $\Z[\Z^n]$.  We then have a short exact sequence
\begin{equation}
\label{eqn:abshort}
0 \longrightarrow \oB \longrightarrow [F_n,F_n]^{\ab} \stackrel{\op}{\longrightarrow} [F(T),F(T)]^{\ab} \longrightarrow 0
\end{equation}
of $\Z[\Z^n]$-modules.  
Since $|T|=n-1$, our inductive hypothesis
says that $[F(T),F(T)]^{\ab}$ has a filtration of the appropriate form.  Using
\eqref{eqn:abshort}, to prove
the theorem for $[F_n,F_n]^{\ab}$, it is enough to prove that
\[\oB \cong \left(\Z[\Z^n]\right)^{\oplus (n-1)}\]
as $\Z[\Z^n]$-modules.  Indeed, we can then take $M_1 = \oB$ and $M_k$ for $2 \leq k \leq n-1$ to
be the preimage in $[F_n,F_n]^{\ab}$ of the filtration for $[F(T),F(T)]^{\ab}$.

Lemma \ref{lemma:recognizeprojection} says that the group $B$ is freely generated
by
\[S = \Set{$[x_1,x_j]^{x_1^{k_1} w}$}{$1 < j \leq n$ and $k_1 \in \Z$ and $w \in F(T)$}.\]
This implies that $\oB$ is generated as an abelian group by
\[\oS = \Set{$\{x_1,x_j\}^{h}$}{$1 \leq j \leq n$ and $h \in \Z^n$}.\]
Here we are using the notation introduced before the proof.  Theorem \ref{maintheorem:commfnbasis}
says that the set
\[S' = \Set{$[x_1,x_j]^{x_1^{k_1} \cdots x_n^{k_n}}$}{$1 \leq j \leq n$ and $k_1,\ldots,k_n \in \Z$}\]
forms part of a free basis for $[F_n,F_n]$.  This implies that $\oS$ is a basis
for the free abelian group $\oB$.  Since the $\Z[\Z^n]$-action on $[F_n,F_n]^{\ab}$ is induced by the conjugation
action of $F_n$ on $[F_n,F_n]$, this implies that $\oB$ is freely generated
as a $\Z[\Z^n]$-module by the $(n-1)$-element set
\[\Set{$\{x_1,x_j\}$}{$1 \leq j \leq n$}.\]
We conclude that
\[\oB \cong \left(\Z[\Z^n]\right)^{\oplus (n-1)},\]
as desired.
\end{proof}

\subsection{Homology (Theorem \ref{maintheorem:homologycalcfn})}
\label{section:homologyfn}

We now prove Theorem \ref{maintheorem:homologycalcfn}, which calculates the homology of $\Z^n$ with coefficients
in $[F_n,F_n]^{\ab}$.  We could use Theorem \ref{maintheorem:commfnmod}, but will instead give
a shorter direct proof.

\begin{proof}[Proof of Theorem \ref{maintheorem:homologycalcfn}]
Fix some $n \geq 2$.  Our goal is to prove that
\[\HH_k(\Z^n;[F_n,F_n]^{\ab}) \cong \wedge^{k+2} \Z^n \quad \text{for $k \geq 0$}.\]
Consider the short exact sequence
\[1 \longrightarrow [F_n,F_n] \longrightarrow F_n \longrightarrow \Z^n \longrightarrow 1.\]
The associated Hochschild--Serre spectral sequence is of the form
\[E^2_{pq} = \HH_p(\Z^n;\HH_q([F_n,F_n])) \Rightarrow \HH_{p+q}(F_n).\]
Since $[F_n,F_n]$ is a free group, all of these entries vanish except
\[E^2_{p0} = \HH_p(\Z^n) = \wedge^p \Z^n \quad \text{and} \quad E^2_{p1} = \HH_1(\Z^n;[F_n,F_n]^{\ab}).\]
The $E^2$-page of our spectral sequence is thus of the form
\begin{center}
\begin{tabular}{|c@{\hspace{0.2 in}}c@{\hspace{0.2 in}}c@{\hspace{0.2 in}}c@{\hspace{0.2 in}}c}
\footnotesize{$\HH_0(\Z^n;[F_n,F_n]^{\ab})$} & \footnotesize{$\HH_1(\Z^n;[F_n,F_n]^{\ab})$} & \footnotesize{$\HH_2(\Z^n;[F_n,F_n]^{\ab})$} & \footnotesize{$\HH_3(\Z^n;[F_n,F_n]^{\ab})$} & \footnotesize{$\cdots$} \\
\footnotesize{$\Z$} & \footnotesize{$\Z^n$} & \footnotesize{$\wedge^2 \Z^n$} & \footnotesize{$\wedge^3 \Z^n$} & \footnotesize{$\cdots$} \\
\cline{1-5}
\end{tabular}
\end{center}
This has to converge to
\[\HH_k(F_n) = \begin{cases}
\Z & \text{if $k=0$},\\
\Z^n & \text{if $k=1$},\\
0 & \text{otherwise}.\end{cases}\]
We deduce that the differentials
\[\wedge^{k+2} \Z^n = E^2_{k+2,0} \rightarrow E^2_{k,1} = \HH_k(\Z^n;[F_n,F_n]^{\ab})\]
must be isomorphisms for all $k \geq 0$.  The theorem follows.
\end{proof}

\section{Commutator subgroup of surface group}
\label{section:surf}

In this section, we prove Theorems \ref{maintheorem:commsurfbasis}, \ref{maintheorem:homologycalcsurf}, 
and \ref{maintheorem:commsurfmod}.  

\subsection{Free generating set (Theorem \ref{maintheorem:commsurfbasis})}
\label{section:surfbasis}

We begin by introducing some notation that will be used throughout this section:
\begin{itemize}
\item Fix some $g \geq 2$, and let
\[\pi = \pi_1(\Sigma_g) = \GroupPres{$x_1,\ldots,x_{2g}$}{$[x_1,x_2] \cdots [x_{2g-1},x_{2g}]=1$}.\]
\item Let $F < \pi$ be the subgroup generated by $T = \{x_3,x_4,\ldots,x_{2g}\}$.
\item Let $G \lhd \pi$ be the normal closure of $F$.
\end{itemize}
We now prove several preliminary lemmas.

\begin{lemma}
\label{lemma:localcommutator}
We have $[\pi,\pi] < G$.
\end{lemma}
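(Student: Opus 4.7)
The plan is to analyze the quotient $\pi/G$ directly and show that it is abelian, from which $[\pi,\pi] \subset G$ follows immediately.

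First I would observe that since $G$ is the normal closure of $F = \langle x_3,\ldots,x_{2g}\rangle$, the quotient $\pi/G$ is obtained from the presentation of $\pi$ by adding the relations $x_3 = x_4 = \cdots = x_{2g} = 1$. Applying these relations to the surface relation $[x_1,x_2][x_3,x_4]\cdots[x_{2g-1},x_{2g}] = 1$, every commutator $[x_{2i-1},x_{2i}]$ with $i \geq 2$ becomes trivial, so the relation collapses to $[x_1,x_2] = 1$.

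Thus $\pi/G$ has the presentation $\langle x_1, x_2 \mid [x_1,x_2]=1\rangle \cong \Z^2$, which is abelian. Since the abelianization $\pi \to \pi^{\ab}$ factors through any abelian quotient, the quotient map $\pi \to \pi/G$ must kill $[\pi,\pi]$, giving $[\pi,\pi] \subset G$ as desired.

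There is essentially no obstacle here: the only thing to be careful about is verifying that the surface relation really does degenerate to $[x_1,x_2]=1$ rather than to something more subtle, but this is immediate from the bracketing of the standard relator.
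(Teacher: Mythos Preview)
Your proof is correct and essentially identical to the paper's: the paper writes down the explicit homomorphism $f\colon \pi \to \langle x_1,x_2 \mid [x_1,x_2]=1\rangle \cong \Z^2$ sending $x_i \mapsto 1$ for $i\ge 3$, identifies $G$ as its kernel, and concludes $[\pi,\pi]\subset G$ since the target is abelian --- exactly your computation of $\pi/G$ via presentations. (One small phrasing slip: it is maps to abelian groups that factor through $\pi^{\ab}$, not the other way around, but your conclusion that the quotient map kills $[\pi,\pi]$ is of course correct.)
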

\begin{proof}
The group $G$ is the kernel of the homomorphism
\[f\colon \pi \rightarrow \GroupPres{$x_1,x_2$}{$[x_1,x_2] = 1$} \cong \Z^2\]
defined via the formula
\[f(x_i) = \begin{cases}
x_i & \text{if $i=1,2$},\\
1 & \text{if $3 \leq i \leq 2g$}.\end{cases}\]
Since the codomain of $f$ is abelian, the kernel of $f$ contains $[\pi,\pi]$.
\end{proof}

For the next lemma, for $w \in \pi$ the notation $F^{w}$ means the $w$-conjugate of the subgroup $F$ of $G$.
Since $G$ is a normal subgroup of $G$, we have $F^w < G$.

\Figure{figure:gcover}{GCover}{The $\Z^2$-regular cover $\tSigma$ of $\Sigma_g$ corresponding to
the normal closure $G$ of $T = \{x_3,\ldots,x_{2g}\}$.  Putting the basepoint in $S_{0,0}$, the subgroup
$\pi_1(S_{0,0})$ of $G = \pi_1(\tSigma)$ is the subgroup $F$ generated by $T$.  The flat region is $X \subset \R^2$, and the lines
in the flat region are of the form
$(n+1/2) \times \R$ and $\R \times (n+1/2)$ for $n \in \Z$.}{82}

\begin{lemma}
\label{lemma:identifyg}
The subgroup $F < \pi$ is free on $T = \{x_3,\ldots,x_{2g}\}$, and
\[G = \BigFreeProd_{(k_1,k_2) \in \Z^2} F^{x_1^{k_1} x_2^{k_2}}.\]
\end{lemma}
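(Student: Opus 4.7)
The plan is to realize $G$ as $\pi_1(\tSigma)$ for the regular $\Z^2$-cover $\tSigma \to \Sigma_g$ corresponding to the normal subgroup $G \lhd \pi$ (its deck group being $\pi/G \cong \Z^2$, generated by the images of $x_1$ and $x_2$), and then compute this fundamental group via Seifert--van Kampen applied to the tiling of $\tSigma$ depicted in Figure \ref{figure:gcover}.

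To set up, I would present $\Sigma_g$ as a quotient of a $4g$-gon $P$ with boundary word $[x_1,x_2][x_3,x_4]\cdots[x_{2g-1},x_{2g}]$, and then build $\tSigma$ directly by taking one copy $P_{k_1,k_2}$ of $P$ for each $(k_1,k_2) \in \Z^2$ with the following identifications: the $x_1$-edges of $P_{k_1,k_2}$ are paired with the $x_1$-edges of $P_{k_1\pm 1, k_2}$; the $x_2$-edges are paired with those of $P_{k_1, k_2\pm 1}$; and the remaining $x_3, \ldots, x_{2g}$ edges are paired internally within each $P_{k_1,k_2}$. Let $S_{k_1,k_2}$ be the image of $P_{k_1,k_2}$ in $\tSigma$ and $S = S_{0,0}$; these are the tiles of Figure \ref{figure:gcover}, with the orange half-integer lines recording the cross-tile identifications, and the deck group $\Z^2$ permutes them by translation.

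The geometric core of the argument is the observation that every nonempty intersection of distinct tiles is contractible: for horizontally or vertically adjacent tiles it is a single arc (a lifted $x_1$- or $x_2$-edge), for diagonally adjacent tiles it is the single corner vertex of $P$, and triple intersections collapse to single vertices as well. Seifert--van Kampen (applied to a slight open thickening of the $S_{k_1,k_2}$) then yields
\[
G \;=\; \pi_1(\tSigma) \;=\; \BigFreeProd_{(k_1,k_2)\in\Z^2} \pi_1(S_{k_1,k_2}),
\]
and a choice of base paths lifting $x_1^{k_1}x_2^{k_2}$ identifies $\pi_1(S_{k_1,k_2})$ with the conjugate $F^{x_1^{k_1}x_2^{k_2}}\subset G$, giving the displayed formula. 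As a consequence, $F = \pi_1(S)$ is a free factor of $G$, hence free. To see $T$ is in fact a free basis, apply Lemma \ref{lemma:recognizefree}: the composition $F\hookrightarrow\pi\twoheadrightarrow\pi^{\ab}=\Z^{2g}$ sends $T$ to the linearly independent standard basis vectors $e_3,\ldots,e_{2g}$, so $T$ projects to a linearly independent generating subset of $F^{\ab}$ and therefore forms a basis.

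The main obstacle I expect is the careful verification of the contractibility of the tile intersections, which requires tracking how the single vertex of $P$ lifts in the cover. Each tile should contain exactly one vertex lift, around which exactly four tiles meet, with all pairwise intersections being either arcs or single points; any extra incidences between tiles would produce an amalgamated colimit rather than a free product. If the polygon description turns out to be slightly awkward, one can instead work directly with the CW structure suggested by the figure (one vertex inside each tile, arcs running along the half-integer lines) to make the incidence pattern fully transparent.
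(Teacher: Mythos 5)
Your tiling of $\tSigma$ by $\Z^2$ copies of the $4g$-gon is a legitimate alternative to the paper's decomposition (the paper instead splits $\tSigma$ into a flat planar piece $X$, the plane with a small disk removed at each lattice point, plus pairwise disjoint genus-$(g-1)$ pieces glued along the resulting boundary circles), and your claimed incidence pattern is essentially correct: side-adjacent closed tiles meet in a single closed $x_1$- or $x_2$-edge, diagonally adjacent tiles meet in a single vertex, and all higher intersections are single vertices. (One factual slip: each closed tile contains \emph{four} vertex lifts, one at each corner image, and each vertex lift lies in four tiles; still, the pairwise and triple intersections are arcs or points as you say.) The genuine gap is the Seifert--van Kampen step. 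With infinitely many pieces that do not contain a common basepoint, contractibility of all pairwise and triple intersections does \emph{not} imply that $\pi_1$ of the union is the free product of the $\pi_1$ of the pieces: cover the circle by three arcs, consecutive ones overlapping in single subarcs, so that every piece and every nonempty pairwise intersection is contractible and all triple intersections are empty; the free product of the (trivial) factors is trivial, yet $\pi_1(S^1)=\Z$. The missing contribution comes from loops in the nerve of the cover, so ruling out amalgamation (your stated worry) is not enough -- you must also rule out extra free factors. Concretely, you need one of: (a) the observation that the nerve of your tiling is simply connected (it is -- its incidence combinatorics is exactly that of the unit-square tiling of $\R^2$ -- but this must be stated and used); (b) an exhaustion argument in which tiles are added one at a time in an order for which each new tile meets the union of the previous ones in a connected, simply connected set; or (c) the paper's own device from Lemma \ref{lemma:recognizeprojection}: enlarge each piece by a carefully chosen path to the basepoint and verify that all finite intersections of the enlarged pieces are contractible, after which the basepointed van Kampen theorem applies. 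As written, your argument invokes a version of van Kampen that does not exist.

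A second, smaller gap: ``$F=\pi_1(S)$ is a free factor of $G$, hence free'' is a non sequitur -- free factors of non-free groups need not be free (e.g.\ $\Z/2$ inside $\Z/2 \ast \Z$), and you have not established that $G$ is free; moreover your subsequent appeal to Lemma \ref{lemma:recognizefree} \emph{requires} the freeness of $F$ as a hypothesis, so this cannot be postponed. The repair is what the paper reads off its picture: $S_{0,0}$ is a compact genus-$(g-1)$ surface with one boundary circle, so $\pi_1(S_{0,0})$ is free, and in fact visibly freely generated by based representatives of $x_3,\ldots,x_{2g}$; with that in hand your Lemma \ref{lemma:recognizefree} argument correctly upgrades ``generates'' to ``freely generates.'' It is worth noting why the paper's decomposition avoids your main difficulty: its pieces are the flat region $X$ and the mutually disjoint handle pieces, so the associated graph of spaces is a star (a tree), and the amalgamations over the boundary circles disappear because those circles freely generate $\pi_1(X)$; no nerve loops ever arise.
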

\begin{proof}
Let $\ast \in \Sigma$ be the basepoint.  Let $(\tSigma,\tast)$ be the based cover of $(\Sigma_g,\ast)$ 
with $G = \pi_1(\tSigma,\tast)$.  We depict $\tSigma$ in Figure \ref{figure:gcover}.  As is shown there, it can be decomposed as
\[\tSigma = X \cup \left(\bigcup_{(k_1,k_2) \in \Z^2} S_{k_1,k_2}\right),\]
where $X$ and $S_{k_1,k_2}$ are as follows:
\begin{itemize}
\item $X = \R^2 \setminus \left(\cup_{(k_1,k_2) \in \Z^2} U_{k_1,k_2}\right)$, where $U_{k_1,k_2}$ is a small
open ball around $(k_1,k_2) \in \R^2$.
\item $S_{k_1,k_2}$ is a genus $(g-1)$ surface with $1$ boundary component glued to $X$ along
$\partial U_{k_1,k_2}$.
\end{itemize}
The deck group $\Z^2$ acts in the evident
way with $(n,m) \in \Z^2$ taking $S_{k_1,k_2}$ to $S_{k_1+n,k_2+m}$.

Choosing our identifications and basepoint correctly, we can ensure that $\tast \in S_{0,0}$ and that
$\pi_1(S_{0,0},\tast) < G$ is $F$.  It is then clear from our picture that $F$ is
free\footnote{This could also be deduced from the Freiheitsatz (\cite{MagnusFreiheitsatz}; see 
\cite{PutmanOneRelator} for an expository account).} on $T = \{x_3,\ldots,x_{2g}\}$ and that
\[G = \BigFreeProd_{(k_1,k_2) \in \Z^2} F^{x_1^{k_1} x_2^{k_2}},\]
as desired.
\end{proof}

Our next lemma is a general one and does not make use of the notation we introduced at
the beginning of this section.

\begin{lemma}
\label{lemma:basis}
Let $S$ be the set of formal symbols $\Set{$y_{n,m}$}{$n,m \in \Z$}$.  Then the free
group $F(S)$ is freely generated by
\[S' = \{y_{0,0}\} \cup \Set{$y_{n+1,m}^{-1} y_{n,m}$}{$n,m \in \Z$} \cup \Set{$y_{0,m+1}^{-1} y_{0,m}$}{$m \in \Z$}.\]
\end{lemma}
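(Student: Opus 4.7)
The plan is to apply Lemma \ref{lemma:recognizefree}: since $F(S)^{\ab}$ is the free abelian group on $S$, it suffices to show that $S'$ generates $F(S)$ and that the image of $S'$ in $F(S)^{\ab}$ is a $\Z$-basis of this free abelian group.

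Generation is a straightforward telescoping argument. Starting from $y_{0,0}$ together with the elements $y_{0,m+1}^{-1}y_{0,m}$ (and their inverses), one recovers every $y_{0,m}$ by induction on $|m|$, handling $m>0$ and $m<0$ separately: if $y_{0,m}$ is known, then $y_{0,m+1} = y_{0,m}\cdot(y_{0,m+1}^{-1}y_{0,m})^{-1}$, and similarly going downward. Then for each fixed $m$, starting from $y_{0,m}$ and using the elements $y_{n+1,m}^{-1}y_{n,m}$, the same kind of telescoping recovers every $y_{n,m}$ by induction on $|n|$.

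For the basis claim, let $e_{n,m} \in F(S)^{\ab}$ denote the image of $y_{n,m}$. The images of the elements of $S'$ are
\[ e_{0,0}, \qquad e_{n,m} - e_{n+1,m}\ (n,m \in \Z), \qquad e_{0,m} - e_{0,m+1}\ (m \in \Z). \]
That this set spans $F(S)^{\ab}$ is immediate from the generation step. For linear independence, the plan is to write down explicitly the inverse of the induced $\Z$-linear map $\Z^{(S')} \to F(S)^{\ab}$: each $e_{n,m}$ should be sent to the specific $\Z$-combination of $S'$-basis elements obtained via telescoping. For instance, when $n,m \geq 0$,
\[ e_{n,m} \;=\; e_{0,0} \;-\; \sum_{k=0}^{m-1}(e_{0,k} - e_{0,k+1}) \;-\; \sum_{k=0}^{n-1}(e_{k,m} - e_{k+1,m}), \]
with the symmetric formulas when $m<0$ or $n<0$ (replace decreasing sums by increasing ones and flip signs). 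One checks directly that the map defined by these formulas is a two-sided inverse to the natural map $\Z^{(S')} \to F(S)^{\ab}$, which verifies that $S'$ projects to a basis.

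There is no real obstacle here; the only thing to watch is the bookkeeping for positive versus negative indices in the telescoping, which is purely formal. Once Lemma \ref{lemma:recognizefree} is applied, the conclusion that $S'$ freely generates $F(S)$ is immediate.
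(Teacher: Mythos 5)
Your argument is correct, but it follows a different route from the paper's. You reduce everything to Lemma \ref{lemma:recognizefree}: a telescoping induction shows $S'$ generates $F(S)$, and an explicit inverse to the linear map $\Z^{(S')} \to F(S)^{\ab}$ shows the image of $S'$ is a basis of the abelianization; the index/sign bookkeeping you defer does check out (e.g.\ for $n,m\geq 0$ the two sums telescope to $e_{0,0}-e_{0,m}$ and $e_{0,m}-e_{n,m}$, and applying your formulas to $e_{n,m}-e_{n+1,m}$ or $e_{0,m}-e_{0,m+1}$ returns exactly the corresponding formal generator, so both composites are the identity on bases). The paper instead never leaves the free group: it encodes $S'$ (after inverting some elements) as the edge set of a spanning tree $\fT$ of $\Z^2$, defines an endomorphism $f$ of $F(S)$ sending each $y_{n,m}$ to $y_{n,m}y_{n',m'}^{-1}$ for the parent $(n',m')$ in $\fT$, and exhibits an explicit inverse $g$ built from tree geodesics, so that $f$ is an automorphism carrying the standard basis to the new generating set; your telescoping identities are essentially the abelianization of that $g$. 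What each buys: the paper's version is self-contained and avoids the Hopfian property of finite-rank free groups, which is where the real content of Lemma \ref{lemma:recognizefree} lies; yours is shorter given that Lemma \ref{lemma:recognizefree} is already established and used elsewhere in the paper (e.g.\ in Lemma \ref{lemma:improvesemidirect}), at the cost of working in the abelianization and handling the positive/negative index cases separately.
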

\begin{proof}
Consider the oriented tree $\fT$ from Figure \ref{figure:changebasis}.  This tree lies in $\R^2$ and its
vertices are $\Z^2$.  Define
\[S'' = \{y_{0,0}\} \cup \Set{$y_{n,m} y_{n',m'}^{-1}$}{$\fT$ has an oriented edge from $(n',m')$ to $(n,m)$}.\] 
The set $S'$ can be obtained from $S''$ by inverting some elements of $S''$, so it is enough to prove
that $F(S)$ is freely generated by $S''$.

\Figure{figure:changebasis}{ChangeBasis}{The oriented tree $\fT \subset \R^2$ used in the proof of Lemma \ref{lemma:basis}.}{82}

Every $(n,m) \in \Z$ except for $(0,0)$ is the endpoint of precisely one edge in $\fT$, so we can define
a homomorphism $f\colon F(S) \rightarrow F(S)$ via the formula
\[f(y_{n,m}) = \begin{cases}
y_{n,m} y_{n',m'}^{-1} & \text{if there is an edge of $\fT$ from $(n',m')$ to $(n,m)$},\\
y_{0,0} & \text{if $(n,m) = (0,0)$}.
\end{cases}\]
We have $f(S) = S''$, so to prove that $S''$ freely generates $F(S)$ it is enough to prove that
$f$ is an automorphism of $F(S)$.

We will do this by writing down an inverse $g\colon F(S) \rightarrow F(S)$ for $f$.  For $(n,m) \in \Z^2$,
there is a unique geodesic edge path in $\fT$ from $(0,0)$ to $(n,m)$.  Let vertices traversed by this
path be
\[(0,0) = (a_1,b_1), (a_2,b_2), \ldots, (a_k,b_k) = (n,m).\]
We then define
\[w_{n,m} = y_{a_k,b_k} y_{a_{k-1},b_{k-1}} \cdots y_{a_1,b_1}.\]
Having done this, we define the homomorphism $g\colon F(S) \rightarrow F(S)$ via the formula
$g(y_{n,m}) = w_{n,m}$.  It is clear that $f \circ g = g \circ f = \text{id}$, so $g$ is the desired
inverse for $f$.
\end{proof}

Our final lemma uses the decomposition from Lemma \ref{lemma:identifyg}:

\begin{lemma}
\label{lemma:kernel}
Let
\[\phi\colon G = \BigFreeProd_{(k_1,k_2) \in \Z^2} F^{x_1^{k_1} x_2^{k_2}} \rightarrow F\]
be the homomorphism that takes the factor $F^{x_1^{k_1} x_2^{k_2}}$ to $F$ via conjugation by $x_1^{-k_1} x_2^{-k_2}$.
Then the kernel of $\phi$ is freely generated by
\begin{align*}
&\Set{$[x_1,x_j]^{x_1^{k_1} x_2^{k_2} w}$}{$3 \leq j \leq 2g$, $k_1,k_2 \in \Z$, and $w \in F$}\\
&\quad\quad\quad \cup \Set{$[x_2,x_j]^{x_2^{k_2} w}$}{$3 \leq j \leq 2g$, $k_2 \in \Z$, and $w \in F$}.
\end{align*}
\end{lemma}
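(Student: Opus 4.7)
The plan is to mimic the geometric proof of Lemma \ref{lemma:recognizeprojection}: realize $\phi$ as the map induced on fundamental groups by a map between wedges of circles, pass to the cover $\hat{W}$ of the domain corresponding to $\ker\phi$, and apply Seifert--van Kampen to a natural decomposition of $\hat{W}$. By Lemma \ref{lemma:identifyg}, $G$ is the free group on $y_j^{(k_1, k_2)} := x_j^{x_1^{k_1} x_2^{k_2}}$ for $3 \leq j \leq 2g$ and $(k_1, k_2) \in \Z^2$, and $\phi$ sends $y_j^{(k_1, k_2)} \mapsto x_j$. I take $W$ to be a wedge of circles labeled by these generators and $Y = \bigvee_{j=3}^{2g} C_j$ (so $\pi_1(Y) = F$), and let $\Phi \colon W \to Y$ collapse each $y_j^{(k_1, k_2)}$-circle onto the $x_j$-circle.

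The cover $\hat{W} \to W$ with $\pi_1(\hat{W}) = \ker\phi$ has deck group $F$. Its vertex set is $F$, and for each $f \in F$, $3 \leq j \leq 2g$, and $(k_1, k_2) \in \Z^2$ there is an edge labeled $y_j^{(k_1, k_2)}$ from $f$ to $fx_j$; so $\hat{W}$ is a ``multi-Cayley graph'' of $F$ with $\Z^2$ parallel edges between Cayley-adjacent vertices. For each Cayley edge $e = (f, fx_j)$, let $E_e$ be the subgraph consisting of the two vertices and the $\Z^2$ parallel edges between them. The $y_j^{(0, 0)}$-edges form a spanning tree $\mathcal{T}$ of $\hat{W}$ (isomorphic to the Cayley tree of $F$); setting $\hat{E}_e := E_e \cup \mathcal{T}$, one checks that all finite intersections of distinct $\hat{E}_e$'s equal $\mathcal{T}$, hence are contractible. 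Seifert--van Kampen at the basepoint $\hat{\ast}_1 := 1 \in F$ therefore gives
\[
\pi_1(\hat{W}, \hat{\ast}_1) = \BigFreeProd_e \pi_1(\hat{E}_e, \hat{\ast}_1) = \BigFreeProd_e \pi_1(E_e, fx_j),
\]
the second equality using the canonical tree path from $\hat{\ast}_1$ to $fx_j$.

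For each $e = (f, fx_j)$, I produce a free basis of $\pi_1(E_e, fx_j)$ that matches the purported generators. Set $\alpha_{k_1, k_2}^e := (y_j^{(0, 0)})^{-1} y_j^{(k_1, k_2)}$, a loop at $fx_j$ (with $\alpha_{0, 0}^e = 1$); then $\pi_1(E_e, fx_j)$ is freely generated by $\{\alpha_{k_1, k_2}^e : (k_1, k_2) \neq (0, 0)\}$. Applying Lemma \ref{lemma:basis} to the formal symbols $\alpha_{k_1, k_2}^e$ and then deleting the trivial element $\alpha_{0, 0}^e$, I obtain the alternative free basis
\[
\{(\alpha_{k_1+1, k_2}^e)^{-1} \alpha_{k_1, k_2}^e : (k_1, k_2) \in \Z^2\} \cup \{(\alpha_{0, k_2+1}^e)^{-1} \alpha_{0, k_2}^e : k_2 \in \Z\}.
\]
Telescoping shows that these elements equal $(y_j^{(k_1+1, k_2)})^{-1} y_j^{(k_1, k_2)}$ and $(y_j^{(0, k_2+1)})^{-1} y_j^{(0, k_2)}$ respectively---the ``bigons'' at $fx_j$ formed by pairs of adjacent parallel edges.

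It remains to transport these bigons from $\pi_1(E_e, fx_j)$ back to $\pi_1(\hat{W}, \hat{\ast}_1)$. The tree path from $\hat{\ast}_1$ to $fx_j$ realizes the element $fx_j \in F \subset G$, so the bigon $(y_j^{(k_1+1, k_2)})^{-1} y_j^{(k_1, k_2)}$ becomes $(fx_j)(y_j^{(k_1+1, k_2)})^{-1} y_j^{(k_1, k_2)}(fx_j)^{-1}$ at $\hat{\ast}_1$. Setting $w := (fx_j)^{-1} \in F$, a direct commutator expansion identifies this with $[x_1, x_j]^{x_1^{k_1} x_2^{k_2} w}$, and analogously the other bigon type gives $[x_2, x_j]^{x_2^{k_2} w}$; as $(f, j)$ varies over $F \times \{3, \ldots, 2g\}$, the parameter $w$ varies over $F$ for each $j$, so the purported set is recovered. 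The main obstacle is finding the non-standard free basis for each $\pi_1(E_e, fx_j)$: the naive ``spoke'' basis $\{\alpha_{k_1, k_2}^e : (k_1, k_2) \neq (0, 0)\}$ does not match the purported generators (which are bigons between grid-adjacent pairs rather than spokes to the origin), and Lemma \ref{lemma:basis} is the precise device that converts between the two descriptions.
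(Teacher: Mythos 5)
Your argument is correct, but it follows a genuinely different route from the paper's. The paper's proof of this lemma is purely algebraic: starting from the free basis $\{x_j^{x_1^{k_1}x_2^{k_2}}\}$ of $G$ supplied by Lemma \ref{lemma:identifyg}, it applies Lemma \ref{lemma:basis} once, globally inside $G$ (for each $j$), to replace that basis by $\{x_j\} \cup \{[x_1,x_j]^{x_1^{k_1}x_2^{k_2}}\} \cup \{[x_2,x_j]^{x_2^{k_2}}\}$, observes that with respect to this new basis $\phi$ is the evident retraction killing exactly the commutator generators, and then quotes Lemma \ref{lemma:easyargument} to conclude that the kernel is freely generated by their $F$-conjugates. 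You instead rerun the covering-space strategy of Lemma \ref{lemma:recognizeprojection} by hand: you build the cover of the wedge corresponding to $\ker\phi$ (your graph with vertex set $F$ and $\Z^2$ parallel edges over each Cayley edge), decompose it by Seifert--van Kampen into the pieces $\hat{E}_e$ along the spanning tree of $y_j^{(0,0)}$-edges, and apply Lemma \ref{lemma:basis} locally in each $\pi_1(E_e, fx_j)$ to trade the spoke basis for the bigon basis; your telescoping and transport computations, giving $[x_1,x_j]^{x_1^{k_1}x_2^{k_2}w}$ and $[x_2,x_j]^{x_2^{k_2}w}$ with $w=(fx_j)^{-1}$, agree with the paper's identities, and the reindexing $(f,j)\mapsto (j,(fx_j)^{-1})$ recovers the stated set exactly once across the free factors. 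The paper's route is shorter because Lemma \ref{lemma:easyargument} already packages the covering-space content you reprove via the $\hat{E}_e$ decomposition, and because applying Lemma \ref{lemma:basis} upstairs in $G$, where $y_{0,0}=x_j$ is an honest basis element, avoids the one wrinkle in your version: since $\alpha^e_{0,0}=1$, your phrase ``applying Lemma \ref{lemma:basis} and then deleting the trivial element'' should be backed by the standard fact that the quotient of a free group by the normal closure of one element of a free basis is free on the images of the remaining basis elements (a one-line addition, not a gap). What your route buys is an explicit geometric picture of $\ker\phi$, directly parallel to the ladder picture in Lemma \ref{lemma:recognizeprojection}, with no appeal to Lemma \ref{lemma:easyargument}.
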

\begin{proof}
Lemma \ref{lemma:identifyg} implies that $G$ has the free generating set
\[S = \Set{$x_j^{x_1^{k_1} x_2^{k_2}}$}{$3 \leq j \leq 2g$ and $k_1,k_2 \in \Z$}.\]
For each $x_j$ with $3 \leq j \leq 2g$, we can apply Lemma \ref{lemma:basis} to the free group on the subset
\[\Set{$x_j^{x_1^{k_1} x_2^{k_2}}$}{$k_1,k_2 \in \Z$} \subset S\]
by identifying $y_{n,m}$ with $x_j^{x_1^{n} x_2^{m}}$.  From this, we see that $G$ is freely generated
by
\begin{align*}
S' = \Set{$x_j$}{$3 \leq j \leq 2g$} &\cup \Set{$\left(x_j^{x_1^{k_1+1} x_2^{k_2}}\right)^{-1} x_j^{x_1^{k_1} x_2^{k_2}}$}{$3 \leq j \leq 2g$ and $k_1,k_2 \in \Z$}\\
&\cup \Set{$\left(x_j^{x_2^{k_2+1}}\right)^{-1} x_j^{x_2^{k_2}}$}{$3 \leq j \leq 2g$ and $k_2 \in \Z$}.
\end{align*}
Since
\begin{align*}
\left(x_j^{x_1^{k_1+1} x_2^{k_2}}\right)^{-1} x_j^{x_1^{k_1} x_2^{k_2}} &= x_2^{-k_2} x_1^{-k_1-1} x_j^{-1} x_1 x_j x_1^{k_1} x_2^{k_2} = [x_1,x_j]^{x_1^{k_1} x_2^{k_2}},\\
\left(x_j^{x_2^{k_2+1}}\right)^{-1} x_j^{x_2^{k_2}} &= x_2^{-k_2-1} x_j^{-1} x_2 x_j x_2^{k_2} = [x_2,x_j]^{x_2^{k_2}},
\end{align*}
we can rewrite $S'$ as
\begin{align*}
S' = \Set{$x_j$}{$3 \leq j \leq 2g$} &\cup \Set{$[x_1,x_j]^{x_1^{k_1} x_2^{k_2}}$}{$3 \leq j \leq 2g$, $k_1,k_2 \in \Z$}\\
&\cup \Set{$[x_2,x_j]^{x_2^{k_2}}$}{$3 \leq j \leq 2g$, $k_2 \in \Z$}.
\end{align*}
With respect to this free generating set for $G$, the homomorphism $\phi$ is the evident retract onto the
free group $F$ on $\{x_3,\ldots,x_{2g}\}$, so $\ker(\phi)$ is normally generated by the following subset of $S'$:
\begin{align*}
&\Set{$[x_1,x_j]^{x_1^{k_1} x_2^{k_2}}$}{$3 \leq j \leq 2g$, $k_1,k_2 \in \Z$}\\
&\quad\quad\quad \cup \Set{$[x_2,x_j]^{x_2^{k_2}}$}{$3 \leq j \leq 2g$, $k_2 \in \Z$}.
\end{align*}
Lemma \ref{lemma:easyargument} shows that the normal closure of the above set is freely generated by
\begin{align*}
&\Set{$[x_1,x_j]^{x_1^{k_1} x_2^{k_2} w}$}{$3 \leq j \leq 2g$, $k_1,k_2 \in \Z$, and $w \in F$}\\
&\quad\quad\quad \cup \Set{$[x_2,x_j]^{x_2^{k_2} w}$}{$3 \leq j \leq 2g$, $k_2 \in \Z$, and $w \in F$},
\end{align*}
as desired.
\end{proof}

We can now prove Theorem \ref{maintheorem:commsurfbasis}.

\begin{proof}[Proof of Theorem \ref{maintheorem:commsurfbasis}]
Recall that we must prove that the group $[\pi,\pi]$ is freely generated by
\[\Set{$[x_i,x_j]^{x_i^{k_i} x_{i+1}^{k_{i+1}} \cdots x_{2g}^{k_{2g}}}$}{$1 \leq i < j \leq 2g$, $(i,j) \neq (1,2)$, and $k_i,\ldots,k_{2g} \in \Z$}.\]
Lemma \ref{lemma:localcommutator} says that $[\pi,\pi] < G$.  Let $\phi\colon G \rightarrow F$
be the homomorphism from Lemma \ref{lemma:kernel}.  By construction, $\ker(\phi) \lhd [\pi,\pi]$
and $\phi([\pi,\pi]) = [F,F]$.  We thus have a short exact sequence
\[1 \longrightarrow \ker(\phi) \longrightarrow [\pi,\pi] \stackrel{\phi}{\longrightarrow} [F,F] \longrightarrow 1.\]
This exact sequence splits, so
\[[\pi,\pi] = \ker(\phi) \rtimes [F,F].\]
Lemma \ref{lemma:kernel} says that $\ker(\phi)$ is freely generated by
\begin{align*}
S = &\Set{$[x_1,x_j]^{x_1^{k_1} x_2^{k_2} w}$}{$3 \leq j \leq 2g$ and $k_1,k_2 \in \Z$ and $w \in F$}\\
    &\quad\quad\quad \cup \Set{$[x_2,x_j]^{x_2^{k_2} w}$}{$3 \leq j \leq 2g$ and $k_2 \in \Z$ and $w \in F$}.
\end{align*}
The group $[F,F]$ acts freely on $S$ by conjugation, and the set
\begin{align*}
S' = &\Set{$[x_1,x_j]^{x_1^{k_1} x_2^{k_2} \cdots x_{2g}^{k_{2g}}}$}{$3 \leq j \leq 2g$ and $k_1,\ldots,k_{2g} \in \Z$}\\
     &\quad\quad\quad \cup \Set{$[x_2,x_j]^{x_2^{k_2} x_3^{k_3} \cdots x_{2g}^{k_{2g}}}$}{$3 \leq j \leq 2g$ and $k_2,\ldots,k_{2g} \in \Z$}
\end{align*}
contains a single element from each $[F,F]$-orbit.  Lemma \ref{lemma:improvesemidirect} thus implies
that
\[[\pi,\pi] = F(S') \ast [F,F].\]
By Theorem \ref{maintheorem:commfnbasis}, the group $[F,F]$ is freely generated by
\[S'' = \Set{$[x_i,x_j]^{x_i^{k_i} x_{i+1}^{k_{i+1}} \cdots x_{2g}^{k_{2g}}}$}{$3 \leq i < j \leq 2g$ and $k_i,\ldots,k_{2g} \in \Z$}.\]
We conclude that $[\pi,\pi]$ is freely generated by $S' \cup S''$, as desired.
\end{proof}

\subsection{Module structure (Theorem \ref{maintheorem:commsurfmod})}
\label{section:modsurf}

We now prove Theorem \ref{maintheorem:commsurfmod}.

\begin{proof}[Proof of Theorem \ref{maintheorem:commsurfmod}]
Fix some $g \geq 1$.  Recall that this theorem asserts that there is a short
exact sequence
\[0 \longrightarrow \Z[\Z^{2g}] \longrightarrow [F_{2g},F_{2g}]^{\ab} \longrightarrow [\pi_1(\Sigma_g),\pi_1(\Sigma_g)]^{\ab} \longrightarrow 0\]
of $\Z[\Z^{2g}]$-modules.

Set
\[r = [x_1,x_2] \cdots [x_{2g-1},x_{2g}] \in [F_{2g},F_{2g}],\]
and let $\{r\} \in [F_{2g},F_{2g}]^{\ab}$ be the image of $r$.  Let $R$ be the $\Z[\Z^{2g}]$-span of $\{r\}$
in $[F_{2g},F_{2g}]^{\ab}$.  We have a short exact sequence
\[0 \longrightarrow R \longrightarrow [F_{2g},F_{2g}]^{\ab} \longrightarrow [\pi_1(\Sigma_g),\pi_1(\Sigma_g)]^{\ab} \longrightarrow 0\]
of $\Z[\Z^{2g}]$-modules, and to prove the theorem it is enough to prove that $R \cong \Z[\Z^{2g}]$.

Let $\Sigma_g^1$ be a compact oriented genus-$g$ surface with $1$ boundary component $\beta$.  Fix a basepoint
$\ast \in \beta$, and identify $F_{2g}$ with $\pi_1(\Sigma_g^1,\ast)$ in such a way that
$r$ is the loop around $\beta$.  Let $(\tSigma,\tast)$ be the cover of $(\Sigma_g^1,\ast)$ with
$\pi_1(\tSigma,\tast) = [F_{2g},F_{2g}]$.  We then have $\HH_1(\tSigma) = [F_{2g},F_{2g}]^{\ab}$.

By construction, $\{r\} \in \HH_1(\tSigma)$ is the homology class of the component of $\partial \tSigma$ containing
$\tast$.  The submodule $R$ of $\HH_1(\tSigma)$ is the image of $\HH_1(\partial \tSigma)$ in
$\HH_1(\tSigma)$.  The deck group $\Z^{2g}$ acts simply transitively on the set of components of $\partial \tSigma$, so
as a $\Z[\Z^{2g}]$-module we have
\[\HH_1(\partial \tSigma) \cong \Z[\Z^{2g}].\]
To prove the theorem, we must show that the map $\HH_1(\partial \tSigma) \rightarrow \HH_1(\tSigma)$ is injective.

This map fits into a long exact sequence of relative homology groups that contains the segment
\[\HH_2(\tSigma,\partial \tSigma) \longrightarrow \HH_1(\partial \tSigma) \longrightarrow \HH_1(\tSigma).\]
We must therefore prove that $\HH_2(\tSigma,\partial \tSigma) = 0$.
Collapse each component $\partial$ of $\partial \tSigma$ to a point $P_{\partial}$ to
form a surface $\tSigma'$.  Let 
\[P = \Set{$P_{\partial}$}{$\partial$ a component of $\tSigma$} \subset \tSigma'.\]
We then have 
\[\HH_2(\tSigma,\partial \tSigma) \cong \HH_2(\tSigma',P) \cong \HH_2(\tSigma') = 0,\]
where the final $=$ follows from the fact that $\tSigma'$ is a 
noncompact connected surface.  The theorem follows.
\end{proof}

\subsection{Homology (Theorem \ref{maintheorem:homologycalcsurf})}
\label{section:homologysurf}

We close the paper by proving Theorem \ref{maintheorem:homologycalcsurf}.

\begin{proof}[Proof of Theorem \ref{maintheorem:homologycalcsurf}]
Fix some $g \geq 1$.  We must prove that
\[\HH_k(\Z^{2g};[\pi_1(\Sigma_g),\pi_1(\Sigma_g)]^{\ab}) \cong
\begin{cases}
(\wedge^2 \Z^{2g}) / \Z & \text{if $k = 0$},\\
\wedge^{k+2} \Z^{2g} & \text{if $k \geq 1$}.
\end{cases}\]
By Theorem \ref{maintheorem:commsurfmod}, we have a short exact sequence
\begin{equation}
\label{eqn:ourlong}
0 \longrightarrow \Z[\Z^{2g}] \longrightarrow [F_{2g},F_{2g}]^{\ab} \longrightarrow [\pi_1(\Sigma_g),\pi_1(\Sigma_g)]^{\ab} \longrightarrow 0
\end{equation}
of $\Z[\Z^{2g}]$-modules.  Since $\Z[\Z^{2g}]$ is a free $\Z[\Z^{2g}]$-module, we have
\[\HH_k(\Z^{2g};\Z[\Z^{2g}]) = \begin{cases}
\Z & \text{if $k=0$},\\
0  & \text{if $k \geq 1$}.\end{cases}\]
Also, Theorem \ref{maintheorem:homologycalcsurf} says that
\[\HH_k(\Z^{2g};[F_{2g},F_{2g}]^{\ab}) \cong
\begin{cases}
\wedge^2 \Z^{2g} & \text{if $k = 0$},\\
\wedge^{k+2} \Z^{2g} & \text{if $k \geq 1$}.
\end{cases}\]
The long exact sequence in $\Z^{2g}$-homology associated to \eqref{eqn:ourlong} thus immediately implies
the result we want for $k \geq 2$.  For $k=0,1$, this long exact sequence contains the segment
\begin{align*}
&0 \rightarrow \wedge^3 \Z^{2g} \rightarrow \HH_1(\Z^{2g};[\pi_1(\Sigma_g),\pi_1(\Sigma_g)]^{\ab}) \\
&\quad\quad\quad \rightarrow \Z \rightarrow \wedge^2 \Z^{2g} \rightarrow \HH_0(\Z^{2g};[\pi_1(\Sigma_g),\pi_1(\Sigma_g)]^{\ab}) \rightarrow 0.
\end{align*}
To prove the theorem, we must therefore prove that the map $\Z \rightarrow \wedge^2 \Z^{2g}$ in this
exact sequence is not the zero map.  For $1 \leq i \leq 2g$, let $\ox_i \in \Z^{2g}$ be the image
of $x_i \in F_{2g}$ in $(F_{2g})^{\ab} = \Z^{2g}$.  Tracing through all the maps involved, we see
that the map $\Z \rightarrow \wedge^2 \Z^{2g}$ takes the generator of $\Z$ to
\[\ox_1 \wedge \ox_2 + \cdots + \ox_{2g-1} \wedge \ox_{2g} \in \wedge^2 \Z^{2g}.\]
This is indeed nonzero, and the theorem follows.
\end{proof}

\begin{footnotesize}
\noindent
\begin{tabular*}{\linewidth}[t]{@{}p{\widthof{School of Mathematics}+0.3in}@{}p{\widthof{Department of Mathematics}+0.3in}@{}p{\linewidth - \widthof{Department of Mathematics} - \widthof{School of Mathematics}- 0.6in}@{}}
&&{\raggedright
Andrew Putman\par
Department of Mathematics\par
University of Notre Dame \par
255 Hurley Hall\par
Notre Dame, IN 46556\par
{\tt andyp@nd.edu}}
\end{tabular*}
\end{footnotesize}

\end{document}